%% file: main.tex
\documentclass{amsart}

\usepackage{lmodern}
\usepackage[T1]{fontenc}
\usepackage{amsmath}
\usepackage{amssymb}
\usepackage{amsthm}
\usepackage{url}
\usepackage{microtype}
\usepackage{needspace}
\usepackage{array}
\usepackage{float}
\usepackage[hidelinks]{hyperref}

\input{generated_curves}

\input{generated_constants}

\theoremstyle{plain}
\newtheorem{theorem}{Theorem}
\newtheorem{lemma}{Lemma}
\theoremstyle{definition}
\newtheorem{question}{Question}

\numberwithin{equation}{section}

\title[On $a^4+b^4+c^4+d^4=\bigl(a+27(b+c+d)\bigr)^4$]
{On $a^4+b^4+c^4+d^4=\bigl(a+27(b+c+d)\bigr)^4$\\ and ten other new infinite families}

\author{Matej Veselovac}
\address{(M. Veselovac) Independent researcher}
\email{vepirson@gmail.com}

\author{Tito Piezas III}
\address{(T. Piezas III) Independent researcher}
\email{tpiezas@gmail.com}

\subjclass[2020]{11D25 (primary); 11G05, 11D41 (secondary)}
\keywords{quartic diophantine equation, four fourth powers, elliptic curve,
Jacobi--Madden, infinite families}

\begin{document}

\begin{abstract}
In 2008, Jacobi and Madden proved that $a^4+b^4+c^4+d^4=e^4$ has infinitely
many primitive solutions with the linear relation $e=a+b+c+d$.
We extend their result to relations with rational cubes $e=a+k^3(b+c+d)$ and prove infinitude
for $k=3$ and $10$ other $k\ne 1$. We compile $\seedCount$ starting solutions.
In the spirit of Jacobi and Madden's original paper, everything here is simple algebra except one theorem of Mazur.
\end{abstract}

\maketitle

\section{Introduction}
\label{sec:intro}

In 1772, Euler~\cite{Eu, Di} conjectured that the sum of three fourth powers is never a fourth power and that the sum of four fourth powers can equal a fourth power,
\begin{equation}
\label{eq:main}
a^4+b^4+c^4+d^4=e^4.
\end{equation}
In 1988, Elkies~\cite{Elk} found infinitely many primitive solutions for the case $d=0$. From now on, let $abcde\ne 0$.
Solutions of \eqref{eq:main} were found by Norrie in 1911~\cite{Nor} and by others, before Rose and Brudno in 1973~\cite{RB} tabulated $82$ primitive solutions.

In 2008, Jacobi and Madden~\cite{JM} proved that infinitely many primitive solutions of \eqref{eq:main} satisfy the linear relation
\begin{equation}
\label{eq:jm}
e=a+b+c+d.
\end{equation}
They used Brudno's seed $955^4+1770^4+(-2634)^4+5400^4=5491^4$~\cite{Bru}. Later, more seeds for \eqref{eq:jm} were found by Tomita~\cite{To1,To2} and MacLeod~\cite{ML}. Piezas~\cite{Pz} compiled a list of $63$ seeds with $e\le 10^{18}$ which Veselovac~\cite{Vea} extended to $126$.

In 2026, Go~\cite{Go}
computed all $45{,}866$ primitive solutions of \eqref{eq:main} with
$e\le10^{7}$.
While mining Go's table for new linear relations in
$a,b,c,d,e$, we discovered~\cite{Ve} that other primitive solutions 
satisfy new linear relations for $k=\frac{n}{m}$, $\gcd(n,m)=1$,
\begin{equation}
\label{eq:slice}
e=a+k^3\,(b+c+d).
\end{equation}
For instance, the $k=3$ relation $e=a+27(b+c+d)$ is carried by the long-known
Rose--Brudno seed
$13355^4+1530^4+(-9498)^4+8010^4=14489^4$~\cite{RB}.

We list $k=3$ and $10$ other $k\ne 1$. We compile their $\seedCount$ seeds (Table~\ref{tab:seeds}) and derive $11$ infinite families (Theorem~\ref{thm:main}), each from one seed placement (Table~\ref{tab:seeddata}).
One identity fits them all (Lemma~\ref{lem:identity}, \cite{MO}) and gives explicit quartic curves $w^2=Q(r)$ in three polynomials $A$, $B$, $C$ (Lemma~\ref{lem:abc}, \cite{MSEa}), see $k=3$ (example \S\ref{sec:example}).

In other words, we derive (identity \S\ref{sec:identity}, parameterization \S\ref{sec:parameterization}, families \S\ref{sec:families}) elliptic curves (Table~\ref{tab:curves}), 
with infinitely many rational points (\S\ref{sec:points}) and primitive solutions (\S\ref{sec:solutions}). 
Everything below is simple algebra except one theorem of Mazur~\cite{Maz}.

\section{Result}
\label{sec:result}

A solution of \eqref{eq:main} is \emph{primitive} if $\gcd(a,b,c,d,e)=1$.
A \emph{seed} is a primitive solution satisfying \eqref{eq:slice} for some \emph{relation} $k$ determined by $k^3=(e-a)/(b+c+d)$.
Let $k=\frac{n}{m}$ with $\gcd(n,m)=1$, $m\ge1$, $n>0$ and $e>0$, so that
\eqref{eq:slice} reads
\begin{equation}
\label{eq:intform}
m^3e=m^3a+n^3(b+c+d).
\end{equation}

\begin{theorem}
\label{thm:main}
Each of the eleven $k$ gives infinitely many primitive solutions of \eqref{eq:main}\textup{:}
\begin{align*}
\theoremList .
\end{align*}
\end{theorem}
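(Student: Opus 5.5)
We follow the Jacobi--Madden strategy, adapted to the cube relation \eqref{eq:slice}. The plan has five steps: reduce to a quadric intersection, parameterize it, pass to a quartic curve, prove it has infinitely many points, and recover primitive solutions.

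\emph{Step 1: reduction to a quadric intersection.} For fixed $k$, substitute $e=a+k^3(b+c+d)$ into \eqref{eq:main}. We then look for an algebraic identity that splits the resulting quartic into a pair of quadratic conditions, in the spirit of Jacobi--Madden's $a^4+b^4+(a+b)^4=2(a^2+ab+b^2)^2$. A natural guess is to write $b+c+d$ and the pairwise quantities in auxiliary variables $p,q,r$, so that the quartic becomes a difference of squares of quadratic forms. This is the identity we would expect Lemma~\ref{lem:identity} to provide. It is where the cube $k^3$ should enter naturally: $k^3$ is exactly what makes one factor a square times a constant. The outcome of this step is that solutions correspond to rational points on an intersection of two quadrics in $\mathbb{P}^3$, i.e.\ a genus one curve.

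\emph{Step 2: parameterization and the quartic curve.} Solve one quadric linearly, using a known rational point coming from a degenerate or trivial solution, to parameterize it by a parameter $r$. Substituting into the second quadric gives a quartic $w^2=Q(r)$ whose coefficients are polynomials $A,B,C$ in $k$, as in Lemma~\ref{lem:abc}. Each seed in Table~\ref{tab:seeddata} yields a rational point $(r_0,w_0)$ on this curve. Using that point, transform $w^2=Q(r)$ to Weierstrass form $E_k$; these are the curves in Table~\ref{tab:curves}.

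\emph{Step 3: infinitely many rational points.} For each of the eleven $k$, show that the seed point $P$ on $E_k$ has infinite order. Mazur's theorem bounds torsion order by $12$, so it suffices to compute $nP$ for $n\le 12$, or just $2\le n\le 12$ against the finitely many possibilities, and check that none is $O$. Equivalently, one can find a multiple with non-integral coordinates on an integral model (Nagell--Lutz). This is the only non-algebraic input, and the step is finite and explicit.

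\emph{Step 4: primitive solutions.} Map the points $mP$ back through the birational maps to $(a,b,c,d,e)$, then clear denominators and divide by the gcd. Two things remain to check. First, we must show infinitely many distinct primitive solutions arise: the map from the curve to projective solutions has finite fibers, so infinitely many points give infinitely many projective solutions, which scale to distinct primitive ones. Second, we must exclude the finitely many points giving $abcde=0$. These come from a finite set of $r$-values, namely the zeros of the coordinate polynomials, so only finitely many multiples are lost.

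\emph{Main obstacle.} The hardest part is Step 1: finding a single identity that works uniformly for the cube relation and keeps the intersection genus one rather than higher genus. After that, the main risk is verifying for every one of the eleven curves that the seed point is non-torsion and that the seed placement is non-degenerate, meaning it is not a point where the map to solutions collapses.
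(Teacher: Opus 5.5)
\textbf{The gap is Step~1.} It carries all of the algebra, and you leave it as a guess. The route you suggest is a Jacobi--Madden-style identity that turns the equation into a Pythagorean or difference-of-squares relation among quadratic forms. That is exactly what is not available: the paper notes that for $k\ne1$ no splitting into two quadric surfaces is known, and leaves it as an open question (\S\ref{sec:open}).

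Your stated outcome is also off by one dimension. For fixed $k$, the solutions of \eqref{eq:norm} form a surface, so they cannot all correspond to points of a single intersection of two quadrics. You need a one-parameter family of genus-one curves on that surface, with the seed selecting the member you work on.

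For the same reason, a ``degenerate or trivial'' point will not parameterize the conic in Step~2. The trivial solution $x=y=z=0$ lies only on the degenerate member $h=0$ of the family below, where $G\equiv0$. The paper instead uses the seed's own point.

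\textbf{The missing idea} is the substitution $x=(p+q)/2$, $y=(p-q)/2$, $z=k+(h-1)(k+p)$ with a free parameter $h$. Multiply the difference of the two sides of \eqref{eq:norm} by $8$ and complete the square in $q^2$. The equation becomes $(3p^2+q^2)^2=R(p)$, with $R$ a polynomial of degree at most four in $p$. Since $p+z=h(k+p)$, at $p=-k$ one has $z=k$ and $1+k^3(p+z)=1$. The relation coefficient $k^3=z^3$ is precisely what makes $p=-k$ a double root of $R$ for every $h$; this is where the cube enters. So $R=4(k+p)^2G$ with $G$ only quadratic in $p$.

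For fixed $h$, the curve is the conic $D^2=G(p)$ together with $q^2=-3p^2+2(k+p)D$. This is an intersection of two quadrics in $(p,D,q)$, so your genus-one expectation does hold fiberwise. The seed fixes $h=(x+y+z)/(k+x+y)$ and supplies the conic point $(p_0,D_0)$. Lines through that point give $w^2=Q(r)$ together with the seed point $T$.

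Once this is supplied, your Steps~3 and~4 go through. Checking $nP\ne O$ for $n\le12$, or finding a non-integral multiple on an integral Weierstrass model, is a valid substitute for the paper's count of $18>16$ distinct multiples on the quartic model. Your finiteness argument in Step~4 matches the paper's.
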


\input{generated_seed_table}

\begin{proof}
For each given $k$, we follow a three-step recipe to a new infinite family.
\begin{enumerate}
\item \emph{Elliptic curves}
(identity \S\ref{sec:identity}, parameterization \S\ref{sec:parameterization}, families \S\ref{sec:families}).
By Lemma~\ref{lem:identity}, any seed (Table~\ref{tab:seeds}), in any of its placements (Table~\ref{tab:seeddata}), gives a rational point $(p_0,D_0)$ of a conic $D^2=G(h,k,p)$.
Take one seed and one placement.
By Lemma~\ref{lem:abc}, the lines through this point parameterize
the conic, and substituting into the remaining square condition
gives the quartic $w^2=Q(r)$,
determined by the polynomials $A,B,C$.
The seed corresponds to a rational point $T$ on the quartic, hence on an elliptic curve (Table~\ref{tab:curves}).
\item \emph{Infinitely many rational points} (\S\ref{sec:points}).
We show that $T$ is not a torsion point.
The multiples of $T$ contain $18$ distinct points (Table~\ref{tab:cert}), giving it infinite order by Mazur~\cite{Maz}.
Hence the curve has infinitely many rational points.
\item \emph{Infinitely many primitive solutions} (\S\ref{sec:solutions}).
By substitutions \eqref{eq:abc} and \eqref{eq:subst},
rational points on the quartic give rational solutions of \eqref{eq:norm}, hence primitive solutions of \eqref{eq:main}.
Distinct points give distinct solutions, up to a sign, and only finitely many points have a zero coordinate. Hence infinitely many primitive solutions remain.
For example, $2T$ gives a new primitive solution.
\end{enumerate}
The families for distinct relations \eqref{eq:slice}, $k_1\ne k_2$, share no solution on the same coordinate $a$. If not, $b+c+d=0$, so $e=a$ and then \eqref{eq:main} gives $b=c=d=0$.
\end{proof}

In Table~\ref{tab:seeds}, we computed the $\seedsNew$ new seeds with $e>10^7$
(seed discovery \S\ref{sec:computation}),
while the $\seedsInGo$ rows with $e\le10^7$ were known~\cite{RB,Wro,Go}.

\section{Identity}
\label{sec:identity}

A \emph{placement} of a seed chooses which of $b,c,d$ is substituted as
$z$ and calls the other two $x>y$, all divided by $a$. Then $(x,y,z)$ is
a rational solution of
\begin{equation}
\label{eq:norm}
1+x^4+y^4+z^4=\bigl(1+k^3(x+y+z)\bigr)^4 .
\end{equation}
Conversely, up to sign, distinct rational solutions of \eqref{eq:norm}
with $xyz\ne0$ are distinct primitive solutions of \eqref{eq:main}
divided by $a$. Do the substitutions
\begin{equation}
\label{eq:subst}
x=\frac{p+q}{2},\qquad y=\frac{p-q}{2},\qquad z=k+(h-1)(k+p),
\end{equation}
with a free parameter $h$. The
equation becomes a biquadratic in $q$, whose solution is
\begin{equation}
\label{eq:disc}
q^2=-3p^2\pm2(k+p)\sqrt{G},
\end{equation}
\[
G=8h(k^2-kp+p^2)+12h^2(k^6-p^2)+8h^3(k^9+p)(k+p)+2h^4(k^{12}-1)(k+p)^2 .
\]
The powers $(k^6,k^9,k^{12})$ explain the large coefficients of the
curves (Table~\ref{tab:curves}). 

The fact that $G$ is only a quadratic in $p$ immediately implies the method. If there is a rational $p$ such that $G$ is a square, then $p$ can be replaced by a ratio of two quadratics in some new variable $r$. Substituting this ratio into the formula for $q^2$, we then get an equation of the form $q^2=Q(r)/C(r)^2$ with $Q$ a quartic in $r$ (Lemma~\ref{lem:abc}). From the known rational point $p$, we can derive a rational point $r$.

Piezas first presented the identity on MathOverflow~\cite{MO} and then parameterized it on Mathematics Stack Exchange~\cite{MSEa}. We state the identity and parameterize it as Lemma~\ref{lem:identity} and Lemma~\ref{lem:abc}, then illustrate them on $k=3$ (example \S\ref{sec:example}). 

For $k=1$, the parameter $h$ of \eqref{eq:subst} is the ratio $(e-a)/(e-d)$ when $z=d/a$. In their paper, Jacobi and Madden~\cite[Section~3]{JM} have curves on which such a ratio is constant, but they do not use it as a parameter.

\begin{lemma}[{\cite{MO}}]
\label{lem:identity}
Let $h,k,p,q$ be rational. With \eqref{eq:subst},
\eqref{eq:norm} holds exactly when 
\[(3p^2+q^2)^2=4\,(k+p)^2\,G.\]
\end{lemma}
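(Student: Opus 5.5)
The plan is a direct substitution: insert \eqref{eq:subst} into \eqref{eq:norm}, clear denominators, complete a square in $q^2$, and check that what remains is exactly $4(k+p)^2G$. Every step is reversible, which gives the ``exactly when''. Throughout write $s=k+p$.

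First I would simplify the two sides of \eqref{eq:norm} separately. From \eqref{eq:subst} we get $x+y=p$ and $z=k+(h-1)s=hs-p$. Hence $x+y+z=hs$, and the right-hand side of \eqref{eq:norm} is $(1+hk^3s)^4$. On the left, the choice $x,y=(p\pm q)/2$ gives
\[
x^4+y^4=\tfrac18\bigl(p^4+6p^2q^2+q^4\bigr).
\]
Multiplying \eqref{eq:norm} by $8$ and using $q^4+6p^2q^2=(q^2+3p^2)^2-9p^4$, the equation becomes equivalent to
\[
(3p^2+q^2)^2=8\Bigl[(1+hk^3s)^4-1-(hs-p)^4+p^4\Bigr].
\]

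It then remains to identify the bracket with $\tfrac12 s^2G$. I would expand both quartics by the binomial theorem. The constant $1$ and the term $p^4$ cancel, and the bracket becomes
\[
4hs(p^3+k^3)+6h^2s^2(k^6-p^2)+4h^3s^3(k^9+p)+h^4s^4(k^{12}-1).
\]
The only term not visibly divisible by $s^2$ is the first. The factorization $p^3+k^3=s(p^2-pk+k^2)$ supplies the missing factor of $s$. Multiplying the bracket by $8$ and comparing term by term with the definition of $G$ then gives exactly $4s^2G$.

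Since multiplying by $8$ and expanding are reversible, \eqref{eq:norm} holds if and only if $(3p^2+q^2)^2=4(k+p)^2G$. There is no real obstacle in this argument. The one point needing care is the divisibility of the linear-in-$h$ term by $s^2$, which rests on the sum-of-cubes factorization; the rest is bookkeeping of binomial coefficients.
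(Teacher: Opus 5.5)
Your proof is correct and takes the same route as the paper. Both multiply \eqref{eq:norm} by $8$, use $8(x^4+y^4)=p^4+6p^2q^2+q^4$, complete the square in $q^2$, and identify what remains with $4(k+p)^2G$. The only difference is that you carry out explicitly the expansion the paper just asserts, via $z=hs-p$, $x+y+z=hs$ and $p^3+k^3=s(p^2-pk+k^2)$, and your bracket does match $\tfrac12 s^2G$ term by term.
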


\begin{proof}
Since $x=(p+q)/2$ and $y=(p-q)/2$, we have $x+y=p$ and $8(x^4+y^4)=p^4+6p^2q^2+q^4$. The substitution turns
$8\bigl[1+x^4+y^4+z^4-\bigl(1+k^3(x+y+z)\bigr)^4\bigr]$ into $F+6p^2q^2+q^4$, where we have
$F=p^4+8\bigl(1+z^4-\bigl(1+k^3(p+z)\bigr)^4\bigr)$. After completing the square, this
vanishes exactly when $(3p^2+q^2)^2=9p^4-F$, and $9p^4-F=4(k+p)^2G$ holds by expansion.
\end{proof}

For fixed $h$ and $k$, the rational solutions of \eqref{eq:norm} with
$k+p\ne0$ are the rational points $(p,D)$ of the conic $D^2=G(h,k,p)$, where
\[D=(3p^2+q^2)/(2(k+p)),\]
at which the \emph{square condition} $q^2=-3p^2+2(k+p)D$ holds for some rational $q$.

\section{Parameterization}
\label{sec:parameterization}

A solution $(x,y,z)$ of \eqref{eq:norm} with $k+x+y\ne0$ has
$p_0=x+y$, $q_0=x-y$ and
\begin{equation}
\label{eq:h}
h=\frac{x+y+z}{k+x+y},\qquad D_0=\frac{3p_0^2+q_0^2}{2(k+p_0)}.
\end{equation}
So it is a rational point $(p_0,D_0)$ of its conic. The lines through
it, of slope $t=4r/h_d^2$ with $h_d>0$ the denominator of $h$ in lowest
terms, parameterize the conic by $r$. Substituting the point $(p,D)$ of
the line into the square condition gives the
quartic $w^2=Q(r)$ of \eqref{eq:Q}. 

Jacobi and Madden~\cite{JM} take
the same approach for $k=1$.
The difference is that they turn \eqref{eq:main} with \eqref{eq:jm} into a Pythagorean triple and split it into two quadric surfaces. For $k\ne1$, no such splitting is known~\cite{MSE}. 

\begin{lemma}[{\cite{MSEa}}]
\label{lem:abc}
Let $h$, $p_0$, $q_0$ be as above, with $h\ne0$, $k+p_0\ne0$, $q_0\ne0$. There are
polynomials $A$ of degree one, $C=r^2-\gamma$ and $B$ of degree at most three,
with rational coefficients, such that for every rational $r$ with $C(r)\ne0$,
\begin{equation}
\label{eq:abc}
p=p_0-\frac{6A}{hC},\qquad q^2=q_0^2-\frac{9AB}{C^2}
\end{equation}
satisfy $(3p^2+q^2)^2=4(k+p)^2G$. Hence $(x,y,z)$ of \eqref{eq:subst}
solves \eqref{eq:norm} whenever
\begin{equation}
\label{eq:Q}
Q(r)=q_0^2C^2-9AB
\end{equation}
is a rational square. The root $r_0$ of $A$ returns the seed when $C(r_0)\ne0$.
\end{lemma}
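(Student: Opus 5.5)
The plan is to parameterize the conic $D^2=G(h,k,p)$ by lines through the known point $(p_0,D_0)$, then substitute into the square condition $q^2=-3p^2+2(k+p)D$ and read off $A$, $B$, $C$.

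\textbf{Step 1: the line and the second intersection.} Write $G=\alpha p^2+\beta p+\delta$, with coefficients polynomial in $h,k$. From the definition of $G$, the $p^2$ coefficient is
\[\alpha=8h-12h^2+8h^3+2h^4(k^{12}-1).\]
Take the line $D=D_0+t(p-p_0)$ with $t=4r/h_d^2$. Substituting into the conic, one root is $p=p_0$. Dividing out $p-p_0$ leaves a linear equation whose root is
\[p=p_0-\frac{2(tD_0-\alpha p_0)-\beta\cdot(\text{correction})}{t^2-\alpha}.\]
The clean statement to aim for is the standard one:
\[p-p_0=\frac{2D_0t-G'(p_0)}{\alpha-t^2}.\]
In terms of $r$, the denominator $t^2-\alpha$ becomes a constant multiple of $r^2-\gamma$, where $\gamma=\alpha h_d^4/16$. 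The numerator is linear in $r$. So $p=p_0-6A/(hC)$ with $C=r^2-\gamma$ and $A$ of degree one, after absorbing constants; the factor $6/h$ is a normalization. I expect $\alpha$ to carry a factor $h$, and I would check this.

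\textbf{Step 2: the square condition.} Next, $D=D_0+t(p-p_0)$ is also a ratio with denominator $C$. Compute
\[q^2=-3p^2+2(k+p)D,\]
and subtract the seed value $q_0^2=-3p_0^2+2(k+p_0)D_0$. The difference $-3(p^2-p_0^2)+2[(k+p)D-(k+p_0)D_0]$ vanishes at $p=p_0$. Every term therefore carries the factor $p-p_0=-6A/(hC)$, which gives
\[q^2-q_0^2=-\frac{9A\,B}{C^2}\]
with $B$ of degree at most three in $r$. The degree bound comes from the cofactor: it has denominator at most $C$ and numerator of degree at most $3$ in $r$, since $t$ is linear in $r$. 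Multiplying by $C^2$ gives $Q=q_0^2C^2-9AB$, so rational squares $Q(r)$ give rational $q$. Lemma~\ref{lem:identity} then gives a solution of \eqref{eq:norm} via \eqref{eq:subst}.

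\textbf{Step 3: the seed and the obstacle.} At the root $r_0$ of $A$ we have $p=p_0$ and $q^2=q_0^2$, so the seed returns as long as $C(r_0)\ne0$. The hypotheses $h\ne0$ and $k+p_0\ne0$ make the conic and $D_0$ well defined. The hypothesis $q_0\ne0$ makes $Q(r_0)=q_0^2C(r_0)^2$ a nonzero square.

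The main obstacle is the bookkeeping in Step 2. One has to verify that $B$ is genuinely a polynomial of degree at most three, with no leftover denominator $C$. Equivalently, $(p-p_0)^2$ terms combined with $D-D_0$ must produce exactly one extra factor $C^{-1}$. I would check this by writing $p-p_0=N/C$ and $D-D_0=tN/C$, then expanding:
\[-3(p-p_0)(p+p_0)+2(p-p_0)D_0+2(k+p)\,t(p-p_0).\]
This equals
\[\frac{N}{C^2}\bigl[-3(N+2p_0C)+2D_0C+2t\,(kC+p_0C+N)\bigr],\]
whose bracket has degree at most $3$. This confirms the claimed shape.
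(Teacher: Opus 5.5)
Your proposal is correct and is essentially the paper's proof. Lines of slope $t=4r/h_d^2$ through $(p_0,D_0)$ give $C=r^2-G_2h_d^4/16$ and a linear $A=-hC(p-p_0)/6$. The only difference is that you get $B$ by factoring $p-p_0$ out of $q^2-q_0^2$ explicitly, while the paper notes $Q(r_0)=q_0^2C(r_0)^2$ and divides by the linear $A$.

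One detail to fix in Step~3: $\deg A=1$ needs $D_0\ne0$, since the $r$-coefficient of $A$ is $h\,h_d^2D_0/12$. This is where $q_0\ne0$ is really used, because $D_0=(3p_0^2+q_0^2)/(2(k+p_0))$.
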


\begin{proof}
Write $G=G_2p^2+G_1p+G_0$ in $p$, so $D_0^2=G(p_0)$ on the conic. The
line $D=D_0+t(p-p_0)$ meets the conic $D^2=G$ where
\[
t^2(p-p_0)^2+2tD_0(p-p_0)=G_2(p^2-p_0^2)+G_1(p-p_0).
\]
Cancel $p-p_0$. The second point has
\[
p-p_0=\frac{G_1+2G_2p_0-2tD_0}{t^2-G_2},\qquad D=D_0+t(p-p_0).
\]
With $t=4r/h_d^2$ the denominator is $16C/h_d^4$, with $C=r^2-\gamma$,
$\gamma=G_2h_d^4/16$. Set
\[
A=-\frac{hC(p-p_0)}{6}=\frac{h\,h_d^2D_0}{12}\,r-\frac{h\,h_d^4(G_1+2G_2p_0)}{96},
\]
linear as $D_0\ne0$, so $pC=p_0C-6A/h$, $DC=D_0C-24rA/(hh_d^2)$ have degree $\le2$.
Hence
\[
Q=q^2C^2=-3(pC)^2+2(kC+pC)\,DC
\]
is a polynomial of degree at most four, and
\[
(3p^2+q^2)^2=4(k+p)^2D^2=4(k+p)^2G.
\]
At the root $r_0$ of $A$ we have $pC=p_0C$ and $DC=D_0C$, so
$Q(r_0)=q_0^2C(r_0)^2$. Hence the
linear $A$ divides $q_0^2C^2-Q$, and \eqref{eq:abc} holds with $B$ of degree at most three,
\[
B=\frac{q_0^2C^2-Q}{9A}.
\]
If $C(r_0)\ne0$, then \eqref{eq:abc} at $r_0$
gives $p=p_0$ and $q^2=q_0^2$, the seed up to sign.
\end{proof}

Similar constructions are known in the literature. For fixed $h$, the curve in $(p,q)$ is a plane quartic with two double points, as in van Luijk~\cite{vL}, but his
double points are rational and ours conjugate.
Bremner, Choudhry and Ulas~\cite{BCU} parameterize a conic through a known point and reach a curve on diagonal quartic surfaces.

\section{The example \texorpdfstring{$k=3$}{k=3}}
\label{sec:example}

The smallest seed for $k=3$ is $13355^4+1530^4+(-9498)^4+8010^4=14489^4$
(Table~\ref{tab:seeds}). Dividing by $a=13355$ gives the rational solution
\[
(x,y,z)=\left(\frac{d}{a},\;\frac{c}{a},\;\frac{b}{a}\right)=\left(\frac{1602}{2671},\;-\frac{9498}{13355},\;\frac{306}{2671}\right)
\]
of the equation
\[
1+x^4+y^4+z^4=\bigl(1+27(x+y+z)\bigr)^4.
\]
Then $h=(x+y+z)/(k+x+y)=2/1837$. The substitutions
\[
x=\frac{p+q}{2},\qquad y=\frac{p-q}{2},\qquad z=k+(h-1)(k+p)
\]
give the initial solution
\[
(p_0,q_0)= \left(-\frac{1488}{13355},\; \frac{17508}{13355}\right),
\] from which we find the parameterization
\[ p = -\frac{1488}{13355}- \frac{6A}{h\,C}\]
\[q^2 = \left(\frac{17508}{13355}\right)^2-\frac{9AB}{C^2}\]
where $(A,B,C)$ are polynomials in $r$ (Table~\ref{tab:seeddata} and Table~\ref{tab:coef}, row $k=3$),
\[A = (48044056139 + 1242748 r)/13355,\] 
\[B = (421773010904633577 + 11850222510380 r + 495920 r^2 + 56 r^3)/13355,\]
\[C = r^2-6190029774.\]
This is valid for any $r$ with $C\ne0$, and $q$ is rational exactly when
$Q=q_0^2C^2-9AB$ is a square. The root $r_0=-48044056139/1242748$ of $A$
returns the initial solution, as the point $T=\bigl(r_0,q_0C(r_0)\bigr)$ of the quartic $w^2=Q(r)$, an elliptic curve in integral form $(Mw)^2=N(r)$ (Table~\ref{tab:curves}, row $k=3$).
The point $2T$ gives a new solution with $\digitsTwoTKThree$ digits in $e$ (primitive solutions \S\ref{sec:solutions}).

\section{Families}
\label{sec:families}

We build the quartic $w^2=Q(r)$
of \eqref{eq:Q} from one seed and one placement. The choice is arbitrary. We take the seed
with the smallest $e$ (Table~\ref{tab:seeds}) and the placement whose
$h$ of \eqref{eq:h} has the smallest height, where the height of $h=h_n/h_d$ in lowest terms
is $\max(|h_n|,|h_d|)$.

For the seed of smallest $e$ of each $k$ (Table~\ref{tab:seeds}), we list its three values $h_1,h_2,h_3$ of \eqref{eq:h}, one for each $z=b/a,c/a,d/a$, then the placement $h$ we use and its point $(p_0,q_0)$ (Table~\ref{tab:seeddata}).

\input{generated_seed_data}

Write the polynomials of Lemma~\ref{lem:abc} as
\begin{equation}
\label{eq:ABCform}
A=\frac{a_0+a_1r}{d_A},\qquad
B=\frac{b_0+b_1r+b_2r^2+b_3r^3}{d_B},\qquad
C=r^2-\frac{c_0}{d_C},
\end{equation}
with integers $a_i$, $b_i$, $c_0$ and $d_A,d_B\ge1$, $d_C\in\{1,4\}$.
The root of $A$ is $r_0=-a_0/a_1$.
The chosen seed and placement (Table~\ref{tab:seeddata}) determine the polynomials (Table~\ref{tab:coef}). 

We list the curve $w^2=Q(r)$ of each family as in \eqref{eq:Q}, in its integral form
$(Mw)^2=N(r)$, $N=M^2Q$, where
\begin{equation}
\label{eq:Nform}
N(r)=n_4r^4+n_3r^3+n_2r^2+n_1r+n_0.
\end{equation}
$N$ has integer coefficients with no common square factor (Table~\ref{tab:curves}).

For $k=2,3,5,5/3$, up to the scaling of $r$, the tables (Table~\ref{tab:seeddata}, Table~\ref{tab:coef}, Table~\ref{tab:curves}) were also derived by Piezas~\cite{MO,MSEa}. For $k=15$ he takes $h$ as $h_3$ instead of $h_2$, a different infinite family on the same seed and the same relation.

\input{generated_families}

\input{generated_curve_table}

Since the choice of seed and placement is arbitrary, up to $\familyCount$ infinite families arise the same way from the $\seedCount$ seeds and their $\familyPerSeed$ placements. For brevity, we derive one infinite family for each of the $11$ relations $k$ (proof of Theorem~\ref{thm:main}).

\section{Infinitely many rational points}
\label{sec:points}

The quartic $w^2=Q(r)$ of \eqref{eq:Q} is an elliptic curve $E$. It carries the seed's rational point $T=\bigl(r_0,q_0C(r_0)\bigr)$ and is nonsingular, as the coefficients show (Table~\ref{tab:curves}).

Take the origin $O=\bigl(r_0,-q_0C(r_0)\bigr)$.
The parabola $w=g(r)$ with $\deg g\le2$ through the points $X$, $Y$ and
$T$ meets $E$ in a fourth point, paired with $X+Y$. The pair of points $(r,\pm w)$ sums to the same point for every $r$, hence to the point $O+T=T$ at $r=r_0$~\cite[Ch.~8]{Cas}. So $jT$ and $(1-j)T$ are paired. We compute $T,2T,\dots,9T$ exactly by this rule.
Jacobi and Madden~\cite{JM} use the same rule.

The prime $\ell$ divides no denominator of these nine points, and their
residues modulo $\ell$ have distinct $r$ and $w\not\equiv0$
(Table~\ref{tab:cert}). So the $18$ multiples $jT$ and $(1-j)T$,
$j=1,\dots,9$, are distinct rational points. A torsion point has at most
$16$ distinct multiples by Mazur~\cite{Maz} (see also~\cite[p.~58]{ST}),
so $T$ has infinite order.

\input{generated_appendix}

\section{Infinitely many primitive solutions}
\label{sec:solutions}

A rational point $(r,w)$ of the quartic gives a primitive solution of
\eqref{eq:main} by substitutions \eqref{eq:abc} and \eqref{eq:subst}. 
The maps $(r,w)\mapsto(p,D,q)\mapsto(x,y,z)$ are
injective, since a line is determined by its second intersection with
the conic. The point $(r,-w)$ exchanges $x$ and $y$, so each solution
of \eqref{eq:main} comes from at most two points. A zero coordinate is
a polynomial equation in $r$, met by finitely many points. Hence
infinitely many primitive solutions remain.

\input{generated_solutions}

Whether the quartics have rational points smaller than their seeds, and hence smaller primitive solutions, is open~\cite[question~VI]{MO}.

\section{Seed discovery}
\label{sec:computation}

We computed the $\seedsNew$ new seeds with $e>10^7$ (Table~\ref{tab:seeds}) by extending
MacLeod's quartic method~\cite{ML} from $k=1$ to other relations $k\ne 1$.
For fixed $k$ and $h$, the solutions of \eqref{eq:norm} are the rational points of
one curve of genus one. The parameterization of Lemma~\ref{lem:abc} needs no seed.
Any rational point of the conic $D^2=G$ parameterizes it and gives a quartic model
$w^2=Q(r)$ of the curve.

For each relation we sweep $h$ by height. For each $h$ we find a rational point of
the conic with \texttt{qfsolve} of PARI/GP~\cite{PARI}, or discard $h$ if there is
none. The height of $r$ at a solution depends on the model. So we take several
models of the curve and minimize and reduce them with \texttt{hyperellminimalmodel}
and \texttt{hyperellred}. We search each model with \texttt{hyperellratpoints} for
rational points. Each point found gives a solution of \eqref{eq:main}. 
Using the same method, the seven $k=3$ seeds with $e>10^7$ were also found by Veselovac~\cite{MOa}.

The relations were searched with $h$ and $r$ of height up to $10^5$. The two largest
seeds, both of $k=5/3$, sit beyond this range. We found them by a $4$-descent
on the free online calculator of Magma~\cite{Mag}. 

The
full pipeline, with more new relations $k$, is the subject of a
follow-up paper. So far, Veselovac~\cite{MOb} compiled a list of $64$ seeds
for $27$ distinct relations $k\ne 1$. The solutions to \eqref{eq:main} carried by infinite families are also listed by Perrenet~\cite{Pe}.

\section{Open questions}
\label{sec:open}

\begin{question}
Does every relation $k$ carry a seed?
Any seed should give an infinite family by the same recipe (proof of Theorem~\ref{thm:main}). In other words, proving that an infinite subset of relations $k$ carries elliptic curves is an open problem.
\end{question}

\begin{question}
Does there exist a split into two quadric surfaces for some $k\ne1$~\cite{MSE}?
Jacobi and Madden~\cite{JM} split the case $k=1$ into two quadric surfaces.
\end{question}

\begin{question}
Do nonlinear relations in $a,b,c,d,e$ carry infinite families of \eqref{eq:main}? There is another set of infinite families on \eqref{eq:main} beside \eqref{eq:slice}, the relation $a=b$~\cite{MOc}, due to a 2014 identity of Bremner, Choudhry and Ulas~\cite{BCU} and a 2026 seed~\cite{Go}.
\end{question}

\section*{Note on use of AI}

Claude Fable (Anthropic) was used to data mine Go's table~\cite{Go} of
all $45{,}866$ primitive solutions of \eqref{eq:main} with $e\le10^{7}$.
The authors proposed it should focus on new linear relations
$\pm \alpha_1 a \pm \alpha_2 b \pm \alpha_3 c \pm \alpha_4 d = \pm \alpha_5 e$. 
It wrote search scripts and performed symbolic analysis. 
Eventually, it singled out the relation with $k=3$ and proposed that it carries an infinite family,
suggesting we check other values of $k$.

It was used to write a search program to our specifications (seed discovery \S\ref{sec:computation}),
and the verification program in exact arithmetic for every number in this paper.

It was used for drafts of the paper, which was finally rewritten by the authors.

\section*{Acknowledgments}

The authors thank Eugene Go for computing and publishing the
table~\cite{Go}. The smallest seeds of $k=3$ and $k=15$ had been in print since
Rose and Brudno's table of 1973~\cite{RB} and Wroblewski's list of
2006~\cite{Wro} respectively.

The authors also thank David, who asked on Mathematics Stack Exchange
for more numerical solutions of \eqref{eq:main} and for relations
between $a,b,c,d,e$ other than Jacobi--Madden's~\cite{Dav},
and everyone who took part in the answers and comments.

\end{document}

%% file: generated_curves.tex
\newcommand{\digitsTwoTKTwo}{108}

\newcommand{\dAKTwo}{86{,}329{,}625}
\newcommand{\dBKTwo}{28{,}747{,}765{,}125}
\newcommand{\dCKTwo}{1}

\newcommand{\hbKTwo}{-\dfrac{296}{909}}
\newcommand{\hcKTwo}{-\dfrac{951176}{7774639}}
\newcommand{\hdKTwo}{-\dfrac{35193512}{310840585}}
\newcommand{\hUsedKTwo}{h_1}
\newcommand{\pzeroSlashKTwo}{-\dfrac{100607668}{86329625}}
\newcommand{\qzeroSlashKTwo}{\dfrac{15452628}{86329625}}
\newcommand{\MKTwo}{86{,}329{,}625/4}

\newcommand{\jmRowRKTwo}{36 & 10 & 22 & 42 & 0 & 35 & 27 & 7 & 28}
\newcommand{\jmRowWKTwo}{\pm19 & \pm19 & \pm11 & \pm3 & \pm4 & \pm8 & \pm4 & \pm1 & \pm20}

\newcommand{\digitsTwoTKThree}{73}

\newcommand{\dAKThree}{13{,}355}
\newcommand{\dBKThree}{13{,}355}
\newcommand{\dCKThree}{1}

\newcommand{\hbKThree}{\dfrac{2}{1837}}
\newcommand{\hcKThree}{\dfrac{14}{16535}}
\newcommand{\hdKThree}{\dfrac{14}{10699}}
\newcommand{\hUsedKThree}{h_1}
\newcommand{\pzeroSlashKThree}{-\dfrac{1488}{13355}}
\newcommand{\qzeroSlashKThree}{\dfrac{17508}{13355}}
\newcommand{\MKThree}{13{,}355/3}

\newcommand{\jmRowRKThree}{10 & 15 & 6 & 21 & 26 & 29 & 25 & 0 & 19}
\newcommand{\jmRowWKThree}{\pm10 & \pm9 & \pm13 & \pm13 & \pm11 & \pm11 & \pm5 & \pm3 & \pm4}

\newcommand{\digitsTwoTKFive}{98}

\newcommand{\dAKFive}{2{,}287{,}976}
\newcommand{\dBKFive}{69{,}497{,}271}
\newcommand{\dCKFive}{4}

\newcommand{\hbKFive}{\dfrac{3483}{561884}}
\newcommand{\hcKFive}{\dfrac{81}{24245}}
\newcommand{\hdKFive}{\dfrac{3483}{118529}}
\newcommand{\hUsedKFive}{h_2}
\newcommand{\pzeroSlashKFive}{\dfrac{2352705}{571994}}
\newcommand{\qzeroSlashKFive}{\dfrac{2216775}{571994}}
\newcommand{\MKFive}{2{,}287{,}976/15}

\newcommand{\jmRowRKFive}{2 & 12 & 5 & 0 & 23 & 14 & 15 & 8 & 4}
\newcommand{\jmRowWKFive}{\pm10 & \pm3 & \pm1 & \pm11 & \pm3 & \pm13 & \pm4 & \pm1 & \pm12}

\newcommand{\digitsTwoTKFifteen}{90}

\newcommand{\dAKFifteen}{35{,}591}
\newcommand{\dBKFifteen}{35{,}591}
\newcommand{\dCKFifteen}{1}

\newcommand{\hbKFifteen}{-\dfrac{2}{36393}}
\newcommand{\hcKFifteen}{-\dfrac{2}{31657}}
\newcommand{\hdKFifteen}{-\dfrac{2}{38719}}
\newcommand{\hUsedKFifteen}{h_2}
\newcommand{\pzeroSlashKFifteen}{-\dfrac{59010}{35591}}
\newcommand{\qzeroSlashKFifteen}{\dfrac{34890}{35591}}
\newcommand{\MKFifteen}{35{,}591/15}

\newcommand{\jmRowRKFifteen}{37 & 4 & 41 & 26 & 44 & 15 & 45 & 27 & 10}
\newcommand{\jmRowWKFifteen}{\pm11 & \pm6 & \pm12 & \pm20 & \pm7 & \pm21 & \pm5 & \pm26 & \pm20}

\newcommand{\digitsTwoTKHalf}{99}

\newcommand{\dAKHalf}{9{,}581{,}469}
\newcommand{\dBKHalf}{167{,}675{,}707{,}500}
\newcommand{\dCKHalf}{1}

\newcommand{\hbKHalf}{\dfrac{380000}{890729}}
\newcommand{\hcKHalf}{\dfrac{2660000}{7202283}}
\newcommand{\hdKHalf}{\dfrac{140000}{77057}}
\newcommand{\hUsedKHalf}{h_3}
\newcommand{\pzeroSlashKHalf}{-\dfrac{864870}{3193823}}
\newcommand{\qzeroSlashKHalf}{\dfrac{483590}{3193823}}
\newcommand{\MKHalf}{3{,}193{,}823/10}

\newcommand{\jmRowRKHalf}{7 & 25 & 10 & 18 & 32 & 27 & 15 & 16 & 40}
\newcommand{\jmRowWKHalf}{\pm18 & \pm17 & \pm4 & \pm4 & \pm18 & \pm13 & \pm14 & \pm10 & \pm4}

\newcommand{\digitsTwoTKFifth}{97}

\newcommand{\dAKFifth}{4{,}424{,}296}
\newcommand{\dBKFifth}{5{,}184{,}721{,}875}
\newcommand{\dCKFifth}{4}

\newcommand{\hbKFifth}{\dfrac{625}{48021}}
\newcommand{\hcKFifth}{\dfrac{11875}{1602874}}
\newcommand{\hdKFifth}{\dfrac{11875}{826699}}
\newcommand{\hUsedKFifth}{h_1}
\newcommand{\pzeroSlashKFifth}{-\dfrac{38735}{1106074}}
\newcommand{\qzeroSlashKFifth}{\dfrac{155235}{1106074}}
\newcommand{\MKFifth}{4{,}424{,}296/5}

\newcommand{\jmRowRKFifth}{12 & 11 & 17 & 19 & 9 & 13 & 8 & 5 & 16}
\newcommand{\jmRowWKFifth}{\pm7 & \pm3 & \pm10 & \pm1 & \pm10 & \pm5 & \pm4 & \pm9 & \pm7}

\newcommand{\digitsTwoTKFiveThirds}{81}

\newcommand{\dAKFiveThirds}{731{,}851}
\newcommand{\dBKFiveThirds}{533{,}519{,}379}
\newcommand{\dCKFiveThirds}{1}

\newcommand{\hbKFiveThirds}{-\dfrac{162}{97}}
\newcommand{\hcKFiveThirds}{-\dfrac{282042}{165979}}
\newcommand{\hdKFiveThirds}{-\dfrac{282042}{1296613}}
\newcommand{\hUsedKFiveThirds}{h_1}
\newcommand{\pzeroSlashKFiveThirds}{-\dfrac{938290}{731851}}
\newcommand{\qzeroSlashKFiveThirds}{\dfrac{1884390}{731851}}
\newcommand{\MKFiveThirds}{731{,}851/5}

\newcommand{\jmRowRKFiveThirds}{6 & 22 & 13 & 24 & 20 & 16 & 19 & 28 & 4}
\newcommand{\jmRowWKFiveThirds}{\pm12 & \pm3 & \pm11 & \pm4 & \pm1 & \pm2 & \pm7 & \pm2 & \pm3}

\newcommand{\digitsTwoTKFiveQuarters}{93}

\newcommand{\dAKFiveQuarters}{169{,}731}
\newcommand{\dBKFiveQuarters}{86{,}902{,}272}
\newcommand{\dCKFiveQuarters}{1}

\newcommand{\hbKFiveQuarters}{-\dfrac{1120256}{401593}}
\newcommand{\hcKFiveQuarters}{\dfrac{2048}{5309}}
\newcommand{\hdKFiveQuarters}{-\dfrac{1120256}{431649}}
\newcommand{\hUsedKFiveQuarters}{h_2}
\newcommand{\pzeroSlashKFiveQuarters}{-\dfrac{3700750}{56577}}
\newcommand{\qzeroSlashKFiveQuarters}{\dfrac{37570}{56577}}
\newcommand{\MKFiveQuarters}{56{,}577/10}

\newcommand{\jmRowRKFiveQuarters}{27 & 16 & 0 & 20 & 7 & 14 & 24 & 23 & 21}
\newcommand{\jmRowWKFiveQuarters}{\pm3 & \pm4 & \pm10 & \pm13 & \pm12 & \pm13 & \pm2 & \pm10 & \pm9}

\newcommand{\digitsTwoTKElevenSevenths}{149}

\newcommand{\dAKElevenSevenths}{230{,}512{,}803{,}800}
\newcommand{\dBKElevenSevenths}{207{,}547{,}965{,}721{,}425}
\newcommand{\dCKElevenSevenths}{4}

\newcommand{\hbKElevenSevenths}{\dfrac{343}{1143}}
\newcommand{\hcKElevenSevenths}{\dfrac{38627223859}{205795453954}}
\newcommand{\hdKElevenSevenths}{-\dfrac{38627223859}{84376277645}}
\newcommand{\hUsedKElevenSevenths}{h_1}
\newcommand{\pzeroSlashKElevenSevenths}{\dfrac{782008406399}{403397406650}}
\newcommand{\qzeroSlashKElevenSevenths}{\dfrac{3191889047589}{403397406650}}
\newcommand{\MKElevenSevenths}{1{,}613{,}589{,}626{,}600/11}

\newcommand{\jmRowRKElevenSevenths}{27 & 6 & 1 & 9 & 14 & 11 & 15 & 21 & 20}
\newcommand{\jmRowWKElevenSevenths}{\pm14 & \pm4 & \pm2 & \pm1 & \pm11 & \pm12 & \pm8 & \pm8 & \pm11}

\newcommand{\digitsTwoTKTwentyfiveThirteenths}{111}

\newcommand{\dAKTwentyfiveThirteenths}{970{,}713}
\newcommand{\dBKTwentyfiveThirteenths}{27{,}724{,}533{,}993}
\newcommand{\dCKTwentyfiveThirteenths}{1}

\newcommand{\hbKTwentyfiveThirteenths}{-\dfrac{57122}{392059}}
\newcommand{\hcKTwentyfiveThirteenths}{-\dfrac{4394}{4651}}
\newcommand{\hdKTwentyfiveThirteenths}{-\dfrac{57122}{403947}}
\newcommand{\hUsedKTwentyfiveThirteenths}{h_2}
\newcommand{\pzeroSlashKTwentyfiveThirteenths}{-\dfrac{6577700}{4206423}}
\newcommand{\qzeroSlashKTwentyfiveThirteenths}{\dfrac{297200}{4206423}}
\newcommand{\MKTwentyfiveThirteenths}{4{,}206{,}423/25}

\newcommand{\jmRowRKTwentyfiveThirteenths}{13 & 8 & 28 & 5 & 20 & 23 & 25 & 15 & 18}
\newcommand{\jmRowWKTwentyfiveThirteenths}{\pm17 & \pm9 & \pm16 & \pm17 & \pm9 & \pm4 & \pm14 & \pm9 & \pm15}

\newcommand{\digitsTwoTKTwentyTwentyninths}{122}

\newcommand{\dAKTwentyTwentyninths}{6{,}546{,}627}
\newcommand{\dBKTwentyTwentyninths}{134{,}278{,}841{,}844{,}423}
\newcommand{\dCKTwentyTwentyninths}{1}

\newcommand{\hbKTwentyTwentyninths}{\dfrac{5658248}{4544549}}
\newcommand{\hcKTwentyTwentyninths}{\dfrac{5658248}{9942899}}
\newcommand{\hdKTwentyTwentyninths}{\dfrac{5658248}{3375675}}
\newcommand{\hUsedKTwentyTwentyninths}{h_1}
\newcommand{\pzeroSlashKTwentyTwentyninths}{\dfrac{1629200}{2182209}}
\newcommand{\qzeroSlashKTwentyTwentyninths}{\dfrac{4529120}{2182209}}
\newcommand{\MKTwentyTwentyninths}{2{,}182{,}209/80}

\newcommand{\jmRowRKTwentyTwentyninths}{15 & 17 & 22 & 12 & 20 & 25 & 9 & 2 & 26}
\newcommand{\jmRowWKTwentyTwentyninths}{\pm3 & \pm10 & \pm5 & \pm12 & \pm2 & \pm13 & \pm3 & \pm7 & \pm7}

\newcommand{\theoremList}{k&=2, & e&=a+8(b+c+d)\\ k&=3, & e&=a+27(b+c+d)\\ k&=5, & e&=a+125(b+c+d)\\ k&=15, & e&=a+3375(b+c+d)\\ k&=1/2, & 8e&=8a+(b+c+d)\\ k&=1/5, & 125e&=125a+(b+c+d)\\ k&=5/3, & 27e&=27a+125(b+c+d)\\ k&=5/4, & 64e&=64a+125(b+c+d)\\ k&=11/7, & 343e&=343a+1331(b+c+d)\\ k&=25/13, & 2197e&=2197a+15625(b+c+d)\\ k&=20/29, & 24389e&=24389a+8000(b+c+d)}

%% file: generated_constants.tex
\newcommand{\seedCount}{30}

\newcommand{\seedsInGo}{11}
\newcommand{\seedsNew}{19}

\newcommand{\familyCount}{90}
\newcommand{\familyPerSeed}{3}

%% file: generated_seed_table.tex
\begin{table}[htbp]
\caption{The $\seedCount$ known seeds of the relations of Theorem~\ref{thm:main}.}
\label{tab:seeds}
\centering
\scriptsize
\setlength{\arraycolsep}{1.3pt}
\noindent$
\begin{array}{|r|rrrr|r|}
\hline
k & a & b & c & d & \log_{10}e \\
\hline
2 & -258{,}988{,}875 & -231{,}435{,}980 & 127{,}732{,}560 & 174{,}090{,}444 & 8.5 \\
\hline
3 & 13{,}355 & 1{,}530 & -9{,}498 & 8{,}010 & 4.2 \\
3 & 329{,}580 & -32{,}420 & -1{,}911{,}244 & 2{,}018{,}095 & 6.4 \\
3 & 25{,}743{,}730 & 9{,}190{,}029 & 15{,}124{,}680 & -24{,}147{,}360 & 7.5 \\
3 & -124{,}524{,}580 & -52{,}412{,}085 & 75{,}796{,}440 & -13{,}976{,}904 & 8.1 \\
3 & -147{,}189{,}130 & 127{,}707{,}750 & -200{,}914{,}356 & 86{,}874{,}225 & 8.3 \\
3 & 348{,}450{,}105 & -549{,}292{,}210 & -223{,}154{,}564 & 790{,}671{,}260 & 8.9 \\
3 & 1{,}023{,}102{,}610 & 3{,}951{,}204{,}720 & -2{,}297{,}995{,}122 & -1{,}539{,}822{,}855 & 9.6 \\
3 & 11{,}018{,}275{,}120 & -8{,}201{,}758{,}680 & 8{,}419{,}634{,}805 & -163{,}591{,}146 & 10.1 \\
3 & -40{,}514{,}975{,}437{,}850 & -37{,}987{,}256{,}480{,}120 & -107{,}503{,}232{,}746{,}244 & 152{,}985{,}292{,}108{,}385 & 14.2 \\
\hline
5 & 571{,}994 & 67{,}965 & -2{,}335{,}290 & 2{,}284{,}740 & 6.4 \\
5 & 634{,}015{,}318 & -362{,}998{,}270 & 835{,}952{,}135 & -470{,}666{,}060 & 9.0 \\
\hline
15 & -35{,}591 & 12{,}060 & -58{,}980 & 46{,}950 & 4.8 \\
\hline
1/2 & 3{,}193{,}823 & -190{,}640 & -674{,}230 & 2{,}194{,}870 & 6.5 \\
\hline
1/5 & 1{,}106{,}074 & 41{,}110 & -96{,}985 & 58{,}250 & 6.0 \\
1/5 & 12{,}617{,}448 & 1{,}830{,}650 & 737{,}925 & 4{,}614{,}300 & 7.1 \\
1/5 & 162{,}727{,}467{,}264{,}452 & 36{,}054{,}239{,}976{,}640 & -34{,}698{,}506{,}490{,}485 & 23{,}624{,}101{,}627{,}470 & 14.2 \\
\hline
5/3 & -731{,}851 & -468{,}220 & -473{,}050 & 1{,}411{,}340 & 6.2 \\
5/3 & -667{,}474 & -153{,}145 & -4{,}655{,}140 & 6{,}427{,}880 & 6.8 \\
5/3 & -26{,}324{,}398 & 101{,}329{,}560 & -89{,}425{,}715 & 18{,}445{,}370 & 8.1 \\
5/3 & -18{,}747{,}702{,}922 & -4{,}329{,}665{,}650 & 31{,}327{,}031{,}400 & -15{,}873{,}707{,}705 & 10.5 \\
5/3 & -792{,}735{,}389 & -47{,}861{,}125{,}920 & 5{,}484{,}360{,}380 & 56{,}010{,}515{,}710 & 10.8 \\
5/3 & \begin{array}[t]{@{}r@{}}-2{,}983{,}335{,}613{,}979\\960{,}189\end{array} & \begin{array}[t]{@{}r@{}}-5{,}912{,}201{,}115{,}326\\921{,}980\end{array} & \begin{array}[t]{@{}r@{}}7{,}639{,}689{,}166{,}079\\778{,}250\end{array} & \begin{array}[t]{@{}r@{}}706{,}101{,}284{,}037{,}470\\010\end{array} & 18.9 \\
5/3 & \begin{array}[t]{@{}r@{}}512{,}913{,}997{,}537{,}775\\926{,}570{,}712\end{array} & \begin{array}[t]{@{}r@{}}830{,}145{,}005{,}167{,}235\\140{,}736{,}610\end{array} & \begin{array}[t]{@{}r@{}}388{,}372{,}190{,}242{,}411\\536{,}691{,}415\end{array} & \begin{array}[t]{@{}r@{}}-1{,}075{,}542{,}606{,}191\\524{,}985{,}472{,}260\end{array} & 24.1 \\
5/3 & \begin{array}[t]{@{}r@{}}2{,}098{,}384{,}979{,}159\\322{,}754{,}051{,}328\end{array} & \begin{array}[t]{@{}r@{}}16{,}679{,}409{,}001{,}735\\096{,}666{,}245{,}070\end{array} & \begin{array}[t]{@{}r@{}}-6{,}767{,}827{,}700{,}115\\360{,}279{,}350{,}640\end{array} & \begin{array}[t]{@{}r@{}}-6{,}714{,}724{,}544{,}272\\328{,}428{,}304{,}845\end{array} & 25.2 \\
\hline
5/4 & -56{,}577 & 1{,}831{,}590 & -2{,}300{,}430 & 1{,}869{,}160 & 6.4 \\
5/4 & -5{,}938{,}786{,}023 & -87{,}349{,}755{,}660 & 75{,}881{,}967{,}700 & 67{,}144{,}321{,}080 & 11.0 \\
\hline
11/7 & 403{,}397{,}406{,}650 & -357{,}108{,}943{,}950 & -1{,}204{,}940{,}320{,}595 & 1{,}986{,}948{,}726{,}994 & 12.3 \\
\hline
25/13 & -4{,}206{,}423 & 3{,}140{,}250 & -5{,}149{,}650 & 3{,}437{,}450 & 6.8 \\
\hline
20/29 & 2{,}182{,}209 & 2{,}273{,}040 & -1{,}449{,}960 & 3{,}079{,}160 & 6.5 \\
\hline
\end{array}$
\end{table}

%% file: generated_seed_data.tex
\begin{table}[ht]
\caption{The seed and placement of each family.}
\label{tab:seeddata}
\centering\scriptsize\setlength{\arraycolsep}{2.3pt}\renewcommand{\arraystretch}{2.9}
\noindent$\begin{array}{|l|r|r|r|r|c|r|r|}
\hline
k & e & h_1 & h_2 & h_3 & h & p_0 & q_0 \\ \hline
2 & {\renewcommand{\arraystretch}{1}\begin{array}[t]{@{}r@{}}304{,}107{,}317\end{array}} & \hbKTwo & \hcKTwo & \hdKTwo & \hUsedKTwo & \pzeroSlashKTwo & \qzeroSlashKTwo \\
3 & {\renewcommand{\arraystretch}{1}\begin{array}[t]{@{}r@{}}14{,}489\end{array}} & \hbKThree & \hcKThree & \hdKThree & \hUsedKThree & \pzeroSlashKThree & \qzeroSlashKThree \\
5 & {\renewcommand{\arraystretch}{1}\begin{array}[t]{@{}r@{}}2{,}748{,}869\end{array}} & \hbKFive & \hcKFive & \hdKFive & \hUsedKFive & \pzeroSlashKFive & \qzeroSlashKFive \\
15 & {\renewcommand{\arraystretch}{1}\begin{array}[t]{@{}r@{}}65{,}659\end{array}} & \hbKFifteen & \hcKFifteen & \hdKFifteen & \hUsedKFifteen & \pzeroSlashKFifteen & \qzeroSlashKFifteen \\
1/2 & {\renewcommand{\arraystretch}{1}\begin{array}[t]{@{}r@{}}3{,}360{,}073\end{array}} & \hbKHalf & \hcKHalf & \hdKHalf & \hUsedKHalf & \pzeroSlashKHalf & \qzeroSlashKHalf \\
1/5 & {\renewcommand{\arraystretch}{1}\begin{array}[t]{@{}r@{}}1{,}106{,}093\end{array}} & \hbKFifth & \hcKFifth & \hdKFifth & \hUsedKFifth & \pzeroSlashKFifth & \qzeroSlashKFifth \\
5/3 & {\renewcommand{\arraystretch}{1}\begin{array}[t]{@{}r@{}}1{,}444{,}399\end{array}} & \hbKFiveThirds & \hcKFiveThirds & \hdKFiveThirds & \hUsedKFiveThirds & \pzeroSlashKFiveThirds & \qzeroSlashKFiveThirds \\
5/4 & {\renewcommand{\arraystretch}{1}\begin{array}[t]{@{}r@{}}2{,}678{,}423\end{array}} & \hbKFiveQuarters & \hcKFiveQuarters & \hdKFiveQuarters & \hUsedKFiveQuarters & \pzeroSlashKFiveQuarters & \qzeroSlashKFiveQuarters \\
11/7 & {\renewcommand{\arraystretch}{1}\begin{array}[t]{@{}r@{}}2{,}052{,}205\\524{,}783\end{array}} & \hbKElevenSevenths & \hcKElevenSevenths & \hdKElevenSevenths & \hUsedKElevenSevenths & \pzeroSlashKElevenSevenths & \qzeroSlashKElevenSevenths \\
25/13 & {\renewcommand{\arraystretch}{1}\begin{array}[t]{@{}r@{}}5{,}949{,}827\end{array}} & \hbKTwentyfiveThirteenths & \hcKTwentyfiveThirteenths & \hdKTwentyfiveThirteenths & \hUsedKTwentyfiveThirteenths & \pzeroSlashKTwentyfiveThirteenths & \qzeroSlashKTwentyfiveThirteenths \\
20/29 & {\renewcommand{\arraystretch}{1}\begin{array}[t]{@{}r@{}}3{,}462{,}209\end{array}} & \hbKTwentyTwentyninths & \hcKTwentyTwentyninths & \hdKTwentyTwentyninths & \hUsedKTwentyTwentyninths & \pzeroSlashKTwentyTwentyninths & \qzeroSlashKTwentyTwentyninths \\
\hline
\end{array}$
\end{table}

%% file: generated_families.tex
\newlength{\cwa}\settowidth{\cwa}{\scriptsize$983{,}911{,}540$}
\newlength{\cwb}\settowidth{\cwb}{\scriptsize$-64{,}799{,}180$}
\newlength{\cwc}\settowidth{\cwc}{\scriptsize$028{,}025{,}981$}
\newlength{\cwd}\settowidth{\cwd}{\scriptsize$-193{,}615{,}006$}
\newlength{\cwe}\settowidth{\cwe}{\scriptsize$-121{,}956{,}838$}
\newlength{\cwf}\settowidth{\cwf}{\scriptsize$9{,}910{,}191$}
\newlength{\cwg}\settowidth{\cwg}{\scriptsize$205{,}061{,}217$}

\begin{table}[H]
\caption{The polynomials \eqref{eq:ABCform} of each family.}
\label{tab:coef}
\vskip\belowcaptionskip
\centering\scriptsize\setlength{\arraycolsep}{2pt}\renewcommand{\arraystretch}{1.15}
\noindent$\begin{array}{|c|w{r}{\cwa}|w{r}{\cwb}|w{r}{\cwc}|w{r}{\cwd}|w{r}{\cwe}|w{r}{\cwf}|w{r}{\cwg}|}
\hline
k & a_0 & a_1 & b_0 & b_1 & b_2 & b_3 & c_0 \\ \hline
2 & \begin{array}[t]{@{}r@{}}5{,}322{,}235\\983{,}911{,}540\\704\end{array} & \begin{array}[t]{@{}r@{}}-4{,}761{,}962\\937{,}888\end{array} & \begin{array}[t]{@{}r@{}}2{,}831{,}288\\028{,}025{,}981\\373{,}962{,}506\end{array} & \begin{array}[t]{@{}r@{}}957{,}745{,}284\\562{,}998{,}450\end{array} & \begin{array}[t]{@{}r@{}}-121{,}956{,}838\\650\end{array} & \begin{array}[t]{@{}r@{}}-475{,}588\end{array} & \begin{array}[t]{@{}r@{}}3{,}752{,}195\\950{,}764\end{array} \\
\cline{2-8}
 & \multicolumn{2}{c|}{d_A=\dAKTwo} & \multicolumn{4}{c|}{d_B=\dBKTwo} & d_C=\dCKTwo \\
\hline
3 & \begin{array}[t]{@{}r@{}}48{,}044{,}056\\139\end{array} & \begin{array}[t]{@{}r@{}}1{,}242{,}748\end{array} & \begin{array}[t]{@{}r@{}}421{,}773{,}010\\904{,}633{,}577\end{array} & \begin{array}[t]{@{}r@{}}11{,}850{,}222\\510{,}380\end{array} & \begin{array}[t]{@{}r@{}}495{,}920\end{array} & \begin{array}[t]{@{}r@{}}56\end{array} & \begin{array}[t]{@{}r@{}}6{,}190{,}029\\774\end{array} \\
\cline{2-8}
 & \multicolumn{2}{c|}{d_A=\dAKThree} & \multicolumn{4}{c|}{d_B=\dBKThree} & d_C=\dCKThree \\
\hline
5 & \begin{array}[t]{@{}r@{}}-48{,}876{,}162\\384{,}454{,}515\\045\end{array} & \begin{array}[t]{@{}r@{}}1{,}351{,}240\\358{,}130\end{array} & \begin{array}[t]{@{}r@{}}-1{,}138{,}944\\450{,}855{,}262\\927{,}386{,}545\\190\end{array} & \begin{array}[t]{@{}r@{}}48{,}120{,}579\\186{,}390{,}209\\655\end{array} & \begin{array}[t]{@{}r@{}}-442{,}339{,}874\\730\end{array} & \begin{array}[t]{@{}r@{}}1{,}720\end{array} & \begin{array}[t]{@{}r@{}}7{,}551{,}957\\205{,}061{,}217\end{array} \\
\cline{2-8}
 & \multicolumn{2}{c|}{d_A=\dAKFive} & \multicolumn{4}{c|}{d_B=\dBKFive} & d_C=\dCKFive \\
\hline
15 & \begin{array}[t]{@{}r@{}}988{,}379{,}273\\429{,}115\end{array} & \begin{array}[t]{@{}r@{}}-64{,}799{,}180\end{array} & \begin{array}[t]{@{}r@{}}3{,}881{,}557\\944{,}452{,}410\\291{,}072{,}705\end{array} & \begin{array}[t]{@{}r@{}}-193{,}615{,}006\\577{,}245{,}180\end{array} & \begin{array}[t]{@{}r@{}}-3{,}476{,}962\\420\end{array} & \begin{array}[t]{@{}r@{}}-40\end{array} & \begin{array}[t]{@{}r@{}}227{,}764{,}111\\207{,}280\end{array} \\
\cline{2-8}
 & \multicolumn{2}{c|}{d_A=\dAKFifteen} & \multicolumn{4}{c|}{d_B=\dBKFifteen} & d_C=\dCKFifteen \\
\hline
1/2 & \begin{array}[t]{@{}r@{}}35{,}644{,}173\\121{,}640{,}345\\150{,}000{,}000\end{array} & \begin{array}[t]{@{}r@{}}4{,}564{,}478\\282{,}000{,}000\end{array} & \begin{array}[t]{@{}r@{}}1{,}861{,}839\\452{,}667{,}367\\460{,}834{,}286\\853{,}750\end{array} & \begin{array}[t]{@{}r@{}}-774{,}697{,}658\\477{,}674{,}041\\950\end{array} & \begin{array}[t]{@{}r@{}}34{,}759{,}598\\785\end{array} & \begin{array}[t]{@{}r@{}}19\end{array} & \begin{array}[t]{@{}r@{}}2{,}456{,}875\\951{,}806{,}010\\000\end{array} \\
\cline{2-8}
 & \multicolumn{2}{c|}{d_A=\dAKHalf} & \multicolumn{4}{c|}{d_B=\dBKHalf} & d_C=\dCKHalf \\
\hline
1/5 & \begin{array}[t]{@{}r@{}}89{,}222{,}486\\669{,}224{,}296\\875\end{array} & \begin{array}[t]{@{}r@{}}783{,}966{,}718\\750\end{array} & \begin{array}[t]{@{}r@{}}4{,}236{,}231\\885{,}715{,}864\\751{,}534{,}820\\000\end{array} & \begin{array}[t]{@{}r@{}}51{,}358{,}670\\356{,}869{,}950\\625\end{array} & \begin{array}[t]{@{}r@{}}18{,}172{,}400\\550\end{array} & \begin{array}[t]{@{}r@{}}152\end{array} & \begin{array}[t]{@{}r@{}}135{,}742{,}527\\834{,}740{,}625\end{array} \\
\cline{2-8}
 & \multicolumn{2}{c|}{d_A=\dAKFifth} & \multicolumn{4}{c|}{d_B=\dBKFifth} & d_C=\dCKFifth \\
\hline
5/3 & \begin{array}[t]{@{}r@{}}2{,}213{,}752\\851{,}633{,}735\end{array} & \begin{array}[t]{@{}r@{}}-14{,}404{,}345\\020\end{array} & \begin{array}[t]{@{}r@{}}320{,}505{,}192\\996{,}143{,}830\\605\end{array} & \begin{array}[t]{@{}r@{}}702{,}958{,}627\\463{,}220\end{array} & \begin{array}[t]{@{}r@{}}4{,}763{,}676\\780\end{array} & \begin{array}[t]{@{}r@{}}-69{,}640\end{array} & \begin{array}[t]{@{}r@{}}38{,}999{,}364\\840\end{array} \\
\cline{2-8}
 & \multicolumn{2}{c|}{d_A=\dAKFiveThirds} & \multicolumn{4}{c|}{d_B=\dBKFiveThirds} & d_C=\dCKFiveThirds \\
\hline
5/4 & \begin{array}[t]{@{}r@{}}166{,}380{,}097\\234{,}032{,}394\\240\end{array} & \begin{array}[t]{@{}r@{}}-15{,}383{,}611\\228{,}160\end{array} & \begin{array}[t]{@{}r@{}}3{,}483{,}760\\390{,}420{,}177\\356{,}423{,}040\end{array} & \begin{array}[t]{@{}r@{}}-964{,}618{,}399\\951{,}008{,}200\end{array} & \begin{array}[t]{@{}r@{}}88{,}987{,}960\\930\end{array} & \begin{array}[t]{@{}r@{}}-2{,}735\end{array} & \begin{array}[t]{@{}r@{}}117{,}167{,}101\\219{,}840\end{array} \\
\cline{2-8}
 & \multicolumn{2}{c|}{d_A=\dAKFiveQuarters} & \multicolumn{4}{c|}{d_B=\dBKFiveQuarters} & d_C=\dCKFiveQuarters \\
\hline
11/7 & \begin{array}[t]{@{}r@{}}-50{,}249{,}959\\640{,}029{,}235\\195{,}109\end{array} & \begin{array}[t]{@{}r@{}}79{,}260{,}572\\244{,}506{,}422\end{array} & \begin{array}[t]{@{}r@{}}-8{,}436{,}633\\864{,}108{,}747\\185{,}556{,}294\\462\end{array} & \begin{array}[t]{@{}r@{}}16{,}321{,}044\\841{,}585{,}509\\420{,}945\end{array} & \begin{array}[t]{@{}r@{}}-15{,}068{,}389\\844{,}289{,}210\end{array} & \begin{array}[t]{@{}r@{}}9{,}910{,}191\\544\end{array} & \begin{array}[t]{@{}r@{}}2{,}217{,}818\\683{,}161\end{array} \\
\cline{2-8}
 & \multicolumn{2}{c|}{d_A=\dAKElevenSevenths} & \multicolumn{4}{c|}{d_B=\dBKElevenSevenths} & d_C=\dCKElevenSevenths \\
\hline
25/13 & \begin{array}[t]{@{}r@{}}2{,}599{,}452\\361{,}144{,}313\\605{,}775\end{array} & \begin{array}[t]{@{}r@{}}-16{,}885{,}276\\162{,}300\end{array} & \begin{array}[t]{@{}r@{}}70{,}068{,}258\\365{,}250{,}698\\508{,}925{,}650\\925\end{array} & \begin{array}[t]{@{}r@{}}178{,}386{,}053\\488{,}054{,}387\\300\end{array} & \begin{array}[t]{@{}r@{}}216{,}094{,}210\\600\end{array} & \begin{array}[t]{@{}r@{}}-2{,}600\end{array} & \begin{array}[t]{@{}r@{}}118{,}431{,}130\\473{,}269{,}122\end{array} \\
\cline{2-8}
 & \multicolumn{2}{c|}{d_A=\dAKTwentyfiveThirteenths} & \multicolumn{4}{c|}{d_B=\dBKTwentyfiveThirteenths} & d_C=\dCKTwentyfiveThirteenths \\
\hline
20/29 & \begin{array}[t]{@{}r@{}}-30{,}079{,}447\\433{,}713{,}550\\379{,}522{,}106\\182{,}080\end{array} & \begin{array}[t]{@{}r@{}}29{,}203{,}574\\879{,}755{,}833\\280\end{array} & \begin{array}[t]{@{}r@{}}-3{,}535{,}159\\512{,}146{,}778\\820{,}060{,}440\\801{,}005{,}856\\912{,}260\end{array} & \begin{array}[t]{@{}r@{}}3{,}069{,}173\\984{,}617{,}167\\314{,}212{,}934\\020\end{array} & \begin{array}[t]{@{}r@{}}-621{,}425{,}006\\004{,}680\end{array} & \begin{array}[t]{@{}r@{}}40\end{array} & \begin{array}[t]{@{}r@{}}54{,}608{,}891\\906{,}149{,}335\\932{,}637{,}700\end{array} \\
\cline{2-8}
 & \multicolumn{2}{c|}{d_A=\dAKTwentyTwentyninths} & \multicolumn{4}{c|}{d_B=\dBKTwentyTwentyninths} & d_C=\dCKTwentyTwentyninths \\
\hline
\end{array}$
\end{table}

%% file: generated_curve_table.tex
\newlength{\nwa}\settowidth{\nwa}{\scriptsize$-353{,}330{,}176$}
\newlength{\nwb}\settowidth{\nwb}{\scriptsize$-477{,}656{,}413$}
\newlength{\nwc}\settowidth{\nwc}{\scriptsize$-115{,}416{,}416$}
\newlength{\nwd}\settowidth{\nwd}{\scriptsize$619{,}971{,}175$}
\newlength{\nwe}\settowidth{\nwe}{\scriptsize$-153{,}177{,}390$}

\begin{table}[H]
\caption{The curve of each family in the form \eqref{eq:Nform}.}
\label{tab:curves}
\vskip\belowcaptionskip
\centering\scriptsize\setlength{\arraycolsep}{2pt}\renewcommand{\arraystretch}{1.0}
\noindent$\begin{array}{|l|c|w{r}{\nwa}|w{r}{\nwb}|w{r}{\nwc}|w{r}{\nwd}|w{r}{\nwe}|}
\hline
k & M & n_4 & n_3 & n_2 & n_1 & n_0 \\ \hline
2 & \MKTwo & \begin{array}[t]{@{}r@{}}-3{,}810{,}637\\554{,}655{,}983\end{array} & \begin{array}[t]{@{}r@{}}3{,}294{,}658\\144{,}329{,}788\\906{,}356\end{array} & \begin{array}[t]{@{}r@{}}8{,}688{,}394\\158{,}150{,}751\\016{,}404{,}499\\928\end{array} & \begin{array}[t]{@{}r@{}}14{,}164{,}091\\619{,}971{,}175\\258{,}179{,}729\\003{,}808{,}984\end{array} & \begin{array}[t]{@{}r@{}}-25{,}243{,}911\\016{,}257{,}649\\955{,}470{,}405\\138{,}570{,}884\\802{,}668\end{array} \\
\hline
3 & \MKThree & \begin{array}[t]{@{}r@{}}-35{,}534{,}992\end{array} & \begin{array}[t]{@{}r@{}}-3{,}306{,}770\\731{,}944\end{array} & \begin{array}[t]{@{}r@{}}-15{,}172{,}317\\493{,}269{,}316\\128\end{array} & \begin{array}[t]{@{}r@{}}-1{,}093{,}490\\321{,}304{,}049\\798{,}772{,}416\end{array} & \begin{array}[t]{@{}r@{}}-18{,}958{,}669\\594{,}580{,}211\\381{,}729{,}967\\107\end{array} \\
\hline
5 & \MKFive & \begin{array}[t]{@{}r@{}}-2{,}711{,}140\\714{,}864\end{array} & \begin{array}[t]{@{}r@{}}897{,}810{,}028\\044{,}913{,}649\\952\end{array} & \begin{array}[t]{@{}r@{}}-115{,}416{,}416\\677{,}615{,}435\\427{,}251{,}232\\520\end{array} & \begin{array}[t]{@{}r@{}}5{,}123{,}867\\588{,}492{,}973\\405{,}625{,}705\\790{,}324{,}846\\952\end{array} & \begin{array}[t]{@{}r@{}}-72{,}061{,}042\\193{,}988{,}843\\770{,}499{,}789\\690{,}575{,}543\\249{,}681{,}727\end{array} \\
\hline
15 & \MKFifteen & \begin{array}[t]{@{}r@{}}-98{,}268{,}412\end{array} & \begin{array}[t]{@{}r@{}}-7{,}430{,}765\\710{,}786{,}040\end{array} & \begin{array}[t]{@{}r@{}}-366{,}845{,}976\\268{,}258{,}433\\382{,}324\end{array} & \begin{array}[t]{@{}r@{}}17{,}715{,}473\\257{,}951{,}705\\951{,}344{,}852\\391{,}904\end{array} & \begin{array}[t]{@{}r@{}}-153{,}177{,}390\\705{,}708{,}590\\080{,}259{,}709\\604{,}041{,}593\\843\end{array} \\
\hline
1/2 & \MKHalf & \begin{array}[t]{@{}r@{}}-47{,}218{,}599\\895\end{array} & \begin{array}[t]{@{}r@{}}-477{,}656{,}413\\175{,}047{,}225\\840\end{array} & \begin{array}[t]{@{}r@{}}1{,}301{,}145\\009{,}354{,}239\\559{,}197{,}976\\829{,}800\end{array} & \begin{array}[t]{@{}r@{}}10{,}922{,}932\\405{,}392{,}374\\290{,}231{,}179\\106{,}910{,}625\\020{,}000\end{array} & \begin{array}[t]{@{}r@{}}-37{,}908{,}013\\850{,}883{,}469\\127{,}529{,}591\\691{,}935{,}543\\873{,}996{,}226\\750{,}000\end{array} \\
\hline
1/5 & \MKFifth & \begin{array}[t]{@{}r@{}}-21{,}184{,}196\\208\end{array} & \begin{array}[t]{@{}r@{}}-8{,}542{,}732\\862{,}888{,}843\\808\end{array} & \begin{array}[t]{@{}r@{}}-13{,}913{,}788\\996{,}424{,}823\\614{,}900{,}872\\200\end{array} & \begin{array}[t]{@{}r@{}}-2{,}427{,}928\\502{,}038{,}642\\395{,}817{,}110\\111{,}622{,}185\\000\end{array} & \begin{array}[t]{@{}r@{}}-98{,}350{,}353\\606{,}460{,}216\\609{,}581{,}519\\111{,}478{,}610\\219{,}139{,}375\end{array} \\
\hline
5/3 & \MKFiveThirds & \begin{array}[t]{@{}r@{}}-353{,}330{,}176\\668\end{array} & \begin{array}[t]{@{}r@{}}110{,}016{,}490\\118{,}842{,}440\end{array} & \begin{array}[t]{@{}r@{}}-11{,}286{,}087\\640{,}387{,}386\\615{,}396\end{array} & \begin{array}[t]{@{}r@{}}1{,}511{,}353\\439{,}254{,}767\\213{,}413{,}941\\536\end{array} & \begin{array}[t]{@{}r@{}}-134{,}348{,}612\\725{,}430{,}751\\586{,}658{,}267\\089{,}827\end{array} \\
\hline
5/4 & \MKFiveQuarters & \begin{array}[t]{@{}r@{}}-821{,}747{,}148\\799\end{array} & \begin{array}[t]{@{}r@{}}35{,}625{,}112\\516{,}970{,}507\\696\end{array} & \begin{array}[t]{@{}r@{}}-579{,}009{,}949\\056{,}088{,}466\\385{,}863{,}616\end{array} & \begin{array}[t]{@{}r@{}}4{,}181{,}369\\504{,}606{,}882\\791{,}265{,}152\\581{,}773{,}312\end{array} & \begin{array}[t]{@{}r@{}}-11{,}320{,}673\\267{,}757{,}974\\580{,}383{,}237\\861{,}678{,}268\\203{,}008\end{array} \\
\hline
11/7 & \MKElevenSevenths & \begin{array}[t]{@{}r@{}}-1{,}832{,}380\\638{,}438{,}457\\490{,}458{,}992\end{array} & \begin{array}[t]{@{}r@{}}6{,}850{,}325\\897{,}679{,}733\\437{,}813{,}195\\907{,}296\end{array} & \begin{array}[t]{@{}r@{}}-9{,}795{,}355\\204{,}412{,}187\\976{,}445{,}434\\145{,}747{,}733\\768\end{array} & \begin{array}[t]{@{}r@{}}6{,}026{,}611\\998{,}486{,}435\\849{,}186{,}703\\490{,}744{,}963\\618{,}154{,}264\end{array} & \begin{array}[t]{@{}r@{}}-1{,}301{,}914\\460{,}957{,}137\\545{,}579{,}351\\406{,}226{,}325\\556{,}686{,}273\\867{,}327\end{array} \\
\hline
25/13 & \MKTwentyfiveThirteenths & \begin{array}[t]{@{}r@{}}-415{,}496{,}242\\528\end{array} & \begin{array}[t]{@{}r@{}}98{,}531{,}470\\409{,}755{,}563\\128\end{array} & \begin{array}[t]{@{}r@{}}23{,}165{,}305\\863{,}062{,}161\\259{,}981{,}238\\744\end{array} & \begin{array}[t]{@{}r@{}}6{,}811{,}037\\583{,}658{,}285\\341{,}859{,}975\\266{,}521{,}581\\808\end{array} & \begin{array}[t]{@{}r@{}}-1{,}722{,}411\\634{,}290{,}991\\242{,}550{,}962\\016{,}473{,}591\\947{,}984{,}338\\267\end{array} \\
\hline
20/29 & \MKTwentyTwentyninths & \begin{array}[t]{@{}r@{}}-5{,}693{,}544\\346\end{array} & \begin{array}[t]{@{}r@{}}147{,}412{,}280\\453{,}234{,}640\\220{,}676\end{array} & \begin{array}[t]{@{}r@{}}-1{,}175{,}242\\470{,}894{,}898\\524{,}056{,}620\\277{,}474{,}595\\225\end{array} & \begin{array}[t]{@{}r@{}}1{,}489{,}726\\034{,}637{,}863\\303{,}318{,}997\\292{,}311{,}155\\427{,}265{,}556\\919{,}854\end{array} & \begin{array}[t]{@{}r@{}}8{,}748{,}117\\941{,}925{,}992\\133{,}515{,}030\\570{,}436{,}704\\375{,}964{,}911\\681{,}868{,}101\\060{,}606{,}597\end{array} \\
\hline
\end{array}$
\end{table}

%% file: generated_appendix.tex
\begin{table}[H]
\caption{The certificate that $T$ has infinite order.}
\label{tab:cert}
\vskip\belowcaptionskip
\centering
\noindent$
\begin{array}{|l|l|r|ccccccccc|}
\hline
k & \ell & j & 1 & 2 & 3 & 4 & 5 & 6 & 7 & 8 & 9 \\
\hline
2 & 47 & r & \jmRowRKTwo \\
 & & w & \jmRowWKTwo \\
\hline
3 & 31 & r & \jmRowRKThree \\
 & & w & \jmRowWKThree \\
\hline
5 & 29 & r & \jmRowRKFive \\
 & & w & \jmRowWKFive \\
\hline
15 & 53 & r & \jmRowRKFifteen \\
 & & w & \jmRowWKFifteen \\
\hline
1/2 & 41 & r & \jmRowRKHalf \\
 & & w & \jmRowWKHalf \\
\hline
1/5 & 23 & r & \jmRowRKFifth \\
 & & w & \jmRowWKFifth \\
\hline
5/3 & 29 & r & \jmRowRKFiveThirds \\
 & & w & \jmRowWKFiveThirds \\
\hline
5/4 & 31 & r & \jmRowRKFiveQuarters \\
 & & w & \jmRowWKFiveQuarters \\
\hline
11/7 & 31 & r & \jmRowRKElevenSevenths \\
 & & w & \jmRowWKElevenSevenths \\
\hline
25/13 & 37 & r & \jmRowRKTwentyfiveThirteenths \\
 & & w & \jmRowWKTwentyfiveThirteenths \\
\hline
20/29 & 31 & r & \jmRowRKTwentyTwentyninths \\
 & & w & \jmRowWKTwentyTwentyninths \\
\hline
\end{array}$
\end{table}

%% file: generated_solutions.tex
\needspace{8\baselineskip}
For $k=2$, the point $2T$ gives the solution with $\digitsTwoTKTwo$ digits in $e$:
{\scriptsize\setlength{\arraycolsep}{4pt}\[\begin{array}{rrrrr}
a & b & c & d & e \\ \hline
\begin{array}[t]{@{}r@{}}-101{,}572{,}248{,}705\\808{,}030{,}152{,}163\\714{,}289{,}572{,}689\\257{,}384{,}340{,}709\\886{,}564{,}394{,}805\\382{,}812{,}727{,}502\\823{,}146{,}675{,}261\\832{,}083{,}523{,}740\\921{,}019{,}875{,}195\end{array} & \begin{array}[t]{@{}r@{}}30{,}717{,}671{,}736\\482{,}869{,}357{,}533\\521{,}259{,}765{,}856\\371{,}632{,}882{,}573\\703{,}157{,}752{,}054\\468{,}636{,}168{,}949\\518{,}467{,}770{,}918\\032{,}237{,}463{,}074\\461{,}000{,}561{,}508\end{array} & \begin{array}[t]{@{}r@{}}86{,}374{,}003{,}479\\275{,}583{,}942{,}363\\611{,}382{,}102{,}143\\101{,}248{,}529{,}658\\629{,}508{,}192{,}894\\665{,}310{,}783{,}334\\025{,}260{,}461{,}314\\320{,}471{,}366{,}332\\409{,}524{,}591{,}400\end{array} & \begin{array}[t]{@{}r@{}}-89{,}070{,}074{,}148\\680{,}508{,}664{,}791\\117{,}683{,}194{,}617\\518{,}647{,}493{,}993\\078{,}125{,}788{,}711\\682{,}927{,}329{,}834\\982{,}276{,}884{,}581\\936{,}656{,}428{,}361\\555{,}268{,}474{,}300\end{array} & \begin{array}[t]{@{}r@{}}122{,}600{,}559{,}830\\815{,}526{,}928{,}684\\405{,}379{,}814{,}366\\376{,}487{,}005{,}204\\149{,}756{,}855{,}094\\225{,}344{,}252{,}085\\668{,}464{,}105{,}941\\496{,}335{,}684{,}621\\601{,}033{,}553{,}669\end{array} \\
\end{array}\]}

\needspace{8\baselineskip}
For $k=3$, the point $2T$ gives the solution with $\digitsTwoTKThree$ digits in $e$:
{\scriptsize\setlength{\arraycolsep}{4pt}\[\begin{array}{rrrrr}
a & b & c & d & e \\ \hline
\begin{array}[t]{@{}r@{}}8{,}272{,}712{,}149\\792{,}361{,}910{,}248\\019{,}597{,}100{,}048\\598{,}000{,}110{,}765\\711{,}467{,}076{,}594\\710{,}346{,}856{,}306\\745\end{array} & \begin{array}[t]{@{}r@{}}239{,}493{,}847{,}126\\549{,}407{,}958{,}968\\328{,}130{,}290{,}499\\445{,}192{,}518{,}012\\680{,}902{,}042{,}248\\033{,}492{,}180{,}186\end{array} & \begin{array}[t]{@{}r@{}}5{,}790{,}148{,}342\\998{,}764{,}209{,}103\\630{,}632{,}361{,}697\\813{,}124{,}706{,}175\\075{,}721{,}469{,}766\\480{,}126{,}333{,}010\\550\end{array} & \begin{array}[t]{@{}r@{}}-5{,}996{,}057{,}237\\877{,}624{,}559{,}525\\520{,}400{,}065{,}431\\824{,}702{,}103{,}123\\148{,}040{,}038{,}001\\876{,}924{,}985{,}349\\570\end{array} & \begin{array}[t]{@{}r@{}}9{,}179{,}505{,}860\\479{,}966{,}463{,}749\\140{,}728{,}617{,}073\\770{,}430{,}591{,}154\\101{,}250{,}089{,}379\\693{,}687{,}532{,}018\\227\end{array} \\
\end{array}\]}

\needspace{8\baselineskip}
For $k=5$, the point $2T$ gives the solution with $\digitsTwoTKFive$ digits in $e$:
{\scriptsize\setlength{\arraycolsep}{4pt}\[\begin{array}{rrrrr}
a & b & c & d & e \\ \hline
\begin{array}[t]{@{}r@{}}19{,}681{,}889{,}714\\839{,}531{,}370{,}025\\687{,}744{,}195{,}707\\692{,}032{,}956{,}643\\923{,}776{,}002{,}141\\071{,}009{,}825{,}607\\469{,}128{,}826{,}531\\417{,}273{,}223{,}317\\814\end{array} & \begin{array}[t]{@{}r@{}}50{,}345{,}160{,}123\\760{,}788{,}111{,}861\\086{,}435{,}466{,}844\\600{,}642{,}083{,}335\\332{,}720{,}287{,}726\\669{,}850{,}388{,}729\\907{,}012{,}682{,}982\\621{,}275{,}685{,}713\\215\end{array} & \begin{array}[t]{@{}r@{}}-51{,}069{,}393{,}816\\663{,}927{,}123{,}308\\730{,}129{,}112{,}903\\916{,}919{,}707{,}758\\355{,}512{,}276{,}169\\736{,}429{,}717{,}128\\587{,}232{,}431{,}192\\555{,}949{,}247{,}061\\860\end{array} & \begin{array}[t]{@{}r@{}}1{,}050{,}589{,}741\\795{,}069{,}551{,}556\\249{,}360{,}679{,}011\\711{,}025{,}195{,}462\\928{,}928{,}640{,}528\\166{,}408{,}821{,}233\\109{,}252{,}512{,}703\\241{,}305{,}324{,}474\\010\end{array} & \begin{array}[t]{@{}r@{}}60{,}476{,}395{,}826\\330{,}848{,}883{,}601\\396{,}123{,}314{,}757\\035{,}479{,}336{,}632\\190{,}857{,}512{,}778\\549{,}696{,}429{,}911\\098{,}224{,}388{,}194\\746{,}243{,}613{,}988\\439\end{array} \\
\end{array}\]}

\needspace{8\baselineskip}
For $k=15$, the point $2T$ gives the solution with $\digitsTwoTKFifteen$ digits in $e$:
{\scriptsize\setlength{\arraycolsep}{4pt}\[\begin{array}{rrrrr}
a & b & c & d & e \\ \hline
\begin{array}[t]{@{}r@{}}-406{,}215{,}715{,}303\\360{,}866{,}967{,}790\\156{,}660{,}951{,}289\\435{,}705{,}967{,}293\\909{,}448{,}416{,}059\\659{,}404{,}905{,}053\\423{,}089{,}906{,}024\\051{,}559\end{array} & \begin{array}[t]{@{}r@{}}-349{,}534{,}813{,}098\\668{,}928{,}660{,}624\\307{,}589{,}248{,}234\\899{,}705{,}925{,}991\\398{,}004{,}645{,}011\\584{,}026{,}897{,}166\\608{,}902{,}719{,}218\\860{,}050\end{array} & \begin{array}[t]{@{}r@{}}766{,}733{,}259{,}344\\623{,}303{,}115{,}080\\270{,}501{,}699{,}320\\876{,}970{,}015{,}776\\863{,}082{,}505{,}502\\223{,}757{,}164{,}829\\551{,}930{,}575{,}575\\778{,}100\end{array} & \begin{array}[t]{@{}r@{}}-416{,}839{,}850{,}214\\536{,}080{,}956{,}647\\427{,}903{,}637{,}754\\438{,}864{,}473{,}560\\325{,}413{,}543{,}064\\452{,}520{,}402{,}968\\996{,}134{,}384{,}408\\596{,}740\end{array} & \begin{array}[t]{@{}r@{}}804{,}045{,}890{,}733\\379{,}688{,}136{,}015\\498{,}084{,}042{,}652\\662{,}998{,}792{,}552\\457{,}622{,}897{,}322\\173{,}888{,}437{,}017\\342{,}377{,}919{,}560\\369{,}691\end{array} \\
\end{array}\]}

\needspace{8\baselineskip}
For $k=1/2$, the point $2T$ gives the solution with $\digitsTwoTKHalf$ digits in $e$:
{\scriptsize\setlength{\arraycolsep}{4pt}\[\begin{array}{rrrrr}
a & b & c & d & e \\ \hline
\begin{array}[t]{@{}r@{}}574{,}484{,}820{,}373\\482{,}697{,}542{,}553\\597{,}022{,}056{,}354\\626{,}042{,}607{,}075\\239{,}205{,}408{,}546\\442{,}676{,}943{,}636\\052{,}000{,}285{,}638\\985{,}770{,}682{,}545\\909\end{array} & \begin{array}[t]{@{}r@{}}-98{,}238{,}004{,}147\\226{,}439{,}752{,}171\\740{,}530{,}495{,}465\\120{,}992{,}747{,}884\\463{,}212{,}481{,}835\\768{,}959{,}839{,}013\\117{,}145{,}222{,}028\\067{,}884{,}313{,}227\\440\end{array} & \begin{array}[t]{@{}r@{}}-42{,}579{,}585{,}202\\836{,}707{,}541{,}451\\748{,}814{,}020{,}806\\984{,}040{,}117{,}660\\540{,}380{,}171{,}546\\376{,}228{,}559{,}450\\230{,}401{,}878{,}323\\952{,}551{,}355{,}950\\740\end{array} & \begin{array}[t]{@{}r@{}}406{,}847{,}605{,}015\\543{,}872{,}041{,}167\\038{,}708{,}125{,}895\\229{,}711{,}756{,}741\\990{,}776{,}259{,}345\\920{,}848{,}108{,}413\\838{,}526{,}841{,}912\\763{,}041{,}196{,}168\\180\end{array} & \begin{array}[t]{@{}r@{}}607{,}738{,}572{,}331\\667{,}788{,}135{,}996\\540{,}692{,}507{,}557\\516{,}627{,}468{,}474\\862{,}603{,}359{,}291\\914{,}634{,}407{,}379\\863{,}372{,}753{,}334\\078{,}596{,}373{,}419\\659\end{array} \\
\end{array}\]}

\needspace{8\baselineskip}
For $k=1/5$, the point $2T$ gives the solution with $\digitsTwoTKFifth$ digits in $e$:
{\scriptsize\setlength{\arraycolsep}{4pt}\[\begin{array}{rrrrr}
a & b & c & d & e \\ \hline
\begin{array}[t]{@{}r@{}}6{,}147{,}298{,}291\\596{,}488{,}640{,}633\\600{,}211{,}308{,}101\\969{,}880{,}076{,}192\\518{,}250{,}428{,}053\\134{,}267{,}647{,}809\\580{,}583{,}880{,}210\\215{,}436{,}722{,}847\\746\end{array} & \begin{array}[t]{@{}r@{}}536{,}173{,}172{,}943\\200{,}040{,}889{,}611\\974{,}172{,}118{,}675\\114{,}705{,}909{,}497\\472{,}264{,}455{,}584\\870{,}081{,}242{,}886\\730{,}540{,}541{,}772\\481{,}291{,}552{,}815\end{array} & \begin{array}[t]{@{}r@{}}37{,}804{,}197{,}259\\826{,}189{,}056{,}335\\343{,}874{,}492{,}968\\001{,}026{,}562{,}975\\539{,}212{,}731{,}671\\019{,}066{,}516{,}411\\985{,}317{,}967{,}551\\524{,}221{,}523{,}370\end{array} & \begin{array}[t]{@{}r@{}}-550{,}505{,}386{,}220\\467{,}506{,}183{,}522\\189{,}453{,}024{,}566\\145{,}608{,}103{,}564\\400{,}380{,}443{,}150\\670{,}302{,}393{,}193\\068{,}945{,}771{,}259\\316{,}459{,}619{,}810\end{array} & \begin{array}[t]{@{}r@{}}6{,}147{,}486{,}067\\468{,}349{,}110{,}423\\699{,}612{,}336{,}850\\666{,}495{,}837{,}187\\469{,}519{,}316{,}827\\087{,}109{,}398{,}572\\509{,}429{,}055{,}512\\119{,}954{,}235{,}275\\397\end{array} \\
\end{array}\]}

\needspace{8\baselineskip}
For $k=5/3$, the point $2T$ gives the solution with $\digitsTwoTKFiveThirds$ digits in $e$:
{\scriptsize\setlength{\arraycolsep}{4pt}\[\begin{array}{rrrrr}
a & b & c & d & e \\ \hline
\begin{array}[t]{@{}r@{}}-41{,}172{,}007{,}377\\156{,}946{,}833{,}944\\944{,}896{,}645{,}788\\505{,}710{,}887{,}560\\285{,}237{,}064{,}507\\748{,}998{,}528{,}368\\498{,}243{,}739\end{array} & \begin{array}[t]{@{}r@{}}142{,}032{,}516{,}691\\422{,}541{,}209{,}970\\403{,}397{,}658{,}031\\098{,}118{,}327{,}371\\790{,}172{,}680{,}525\\281{,}031{,}296{,}194\\422{,}129{,}300\end{array} & \begin{array}[t]{@{}r@{}}-98{,}100{,}912{,}282\\931{,}065{,}040{,}349\\486{,}509{,}924{,}636\\080{,}388{,}120{,}546\\746{,}243{,}727{,}926\\417{,}887{,}070{,}004\\531{,}569{,}250\end{array} & \begin{array}[t]{@{}r@{}}-2{,}699{,}418{,}040\\896{,}048{,}275{,}945\\178{,}884{,}830{,}787\\763{,}403{,}958{,}004\\220{,}243{,}209{,}340\\343{,}554{,}143{,}543\\166{,}280{,}860\end{array} & \begin{array}[t]{@{}r@{}}149{,}717{,}744{,}324\\673{,}737{,}858{,}998\\286{,}598{,}273{,}689\\523{,}577{,}301{,}425\\009{,}604{,}339{,}466\\878{,}733{,}335{,}736\\706{,}752{,}511\end{array} \\
\end{array}\]}

\needspace{8\baselineskip}
For $k=5/4$, the point $2T$ gives the solution with $\digitsTwoTKFiveQuarters$ digits in $e$:
{\scriptsize\setlength{\arraycolsep}{4pt}\[\begin{array}{rrrrr}
a & b & c & d & e \\ \hline
\begin{array}[t]{@{}r@{}}-18{,}656{,}694{,}544\\697{,}661{,}144{,}334\\209{,}508{,}023{,}971\\515{,}330{,}425{,}527\\379{,}625{,}195{,}763\\062{,}443{,}432{,}898\\981{,}275{,}280{,}463\\449{,}015{,}017\end{array} & \begin{array}[t]{@{}r@{}}598{,}693{,}660{,}870\\825{,}063{,}160{,}896\\418{,}186{,}393{,}243\\547{,}272{,}301{,}313\\489{,}997{,}044{,}621\\670{,}774{,}654{,}081\\956{,}424{,}776{,}818\\581{,}257{,}510\end{array} & \begin{array}[t]{@{}r@{}}636{,}319{,}412{,}285\\439{,}525{,}415{,}593\\560{,}969{,}147{,}344\\361{,}054{,}977{,}366\\369{,}344{,}759{,}850\\070{,}823{,}242{,}396\\121{,}945{,}857{,}770\\717{,}558{,}170\end{array} & \begin{array}[t]{@{}r@{}}-767{,}590{,}651{,}647\\580{,}492{,}725{,}076\\172{,}229{,}564{,}743\\689{,}640{,}449{,}260\\710{,}709{,}008{,}388\\734{,}075{,}330{,}313\\883{,}147{,}741{,}077\\817{,}464{,}000\end{array} & \begin{array}[t]{@{}r@{}}894{,}277{,}722{,}464\\450{,}963{,}565{,}458\\382{,}144{,}272{,}599\\224{,}292{,}288{,}181\\895{,}048{,}234{,}086\\561{,}624{,}079{,}140\\462{,}519{,}433{,}426\\162{,}999{,}983\end{array} \\
\end{array}\]}

\needspace{8\baselineskip}
For $k=11/7$, the point $2T$ gives the solution with $\digitsTwoTKElevenSevenths$ digits in $e$:
{\scriptsize\setlength{\arraycolsep}{4pt}\[\begin{array}{rrrrr}
a & b & c & d & e \\ \hline
\begin{array}[t]{@{}r@{}}-1{,}417{,}798{,}786\\929{,}386{,}472{,}850\\417{,}731{,}827{,}526\\326{,}255{,}900{,}087\\231{,}452{,}499{,}491\\843{,}428{,}443{,}899\\531{,}666{,}168{,}192\\510{,}405{,}867{,}577\\642{,}754{,}340{,}083\\865{,}200{,}026{,}177\\246{,}581{,}641{,}323\\715{,}912{,}474{,}514\\841{,}110\end{array} & \begin{array}[t]{@{}r@{}}3{,}367{,}149{,}058\\080{,}145{,}648{,}410\\274{,}235{,}818{,}170\\226{,}046{,}244{,}961\\441{,}643{,}676{,}475\\338{,}398{,}100{,}319\\229{,}130{,}927{,}472\\060{,}635{,}242{,}464\\133{,}338{,}974{,}370\\536{,}047{,}315{,}464\\721{,}318{,}956{,}655\\326{,}324{,}807{,}180\\453{,}765\end{array} & \begin{array}[t]{@{}r@{}}12{,}493{,}409{,}757\\599{,}869{,}021{,}820\\301{,}470{,}175{,}504\\257{,}367{,}191{,}281\\464{,}416{,}663{,}625\\249{,}321{,}065{,}611\\680{,}586{,}736{,}758\\679{,}548{,}439{,}222\\420{,}220{,}529{,}131\\344{,}717{,}556{,}576\\559{,}762{,}713{,}714\\608{,}301{,}822{,}445\\825{,}474\end{array} & \begin{array}[t]{@{}r@{}}-11{,}769{,}589{,}325\\195{,}932{,}672{,}835\\376{,}834{,}864{,}371\\195{,}610{,}392{,}949\\556{,}081{,}512{,}954\\132{,}793{,}756{,}786\\155{,}155{,}027{,}156\\916{,}234{,}215{,}199\\993{,}256{,}511{,}029\\490{,}774{,}026{,}021\\487{,}990{,}237{,}068\\618{,}178{,}589{,}207\\806{,}430\end{array} & \begin{array}[t]{@{}r@{}}14{,}457{,}071{,}160\\109{,}427{,}342{,}114\\625{,}117{,}948{,}283\\224{,}956{,}492{,}400\\957{,}532{,}395{,}353\\437{,}929{,}339{,}108\\247{,}700{,}799{,}577\\925{,}969{,}467{,}389\\155{,}389{,}342{,}075\\758{,}933{,}548{,}319\\384{,}918{,}935{,}131\\246{,}747{,}414{,}106\\113{,}143\end{array} \\
\end{array}\]}

\needspace{8\baselineskip}
For $k=25/13$, the point $2T$ gives the solution with $\digitsTwoTKTwentyfiveThirteenths$ digits in $e$:
{\scriptsize\setlength{\arraycolsep}{4pt}\[\begin{array}{rrrrr}
a & b & c & d & e \\ \hline
\begin{array}[t]{@{}r@{}}-334{,}148{,}341{,}439\\654{,}108{,}664{,}664\\433{,}754{,}833{,}569\\778{,}457{,}088{,}307\\809{,}243{,}986{,}899\\214{,}773{,}073{,}145\\753{,}732{,}312{,}412\\856{,}160{,}386{,}001\\434{,}321{,}720{,}545\\687\end{array} & \begin{array}[t]{@{}r@{}}225{,}622{,}948{,}634\\646{,}229{,}287{,}417\\999{,}068{,}976{,}270\\393{,}199{,}375{,}907\\840{,}255{,}975{,}562\\514{,}031{,}898{,}665\\919{,}506{,}709{,}139\\083{,}582{,}483{,}887\\002{,}899{,}263{,}947\\050\end{array} & \begin{array}[t]{@{}r@{}}296{,}627{,}800{,}187\\605{,}209{,}241{,}128\\122{,}903{,}948{,}346\\902{,}898{,}792{,}338\\874{,}383{,}051{,}045\\140{,}610{,}073{,}927\\999{,}781{,}850{,}968\\926{,}216{,}128{,}121\\852{,}732{,}863{,}481\\850\end{array} & \begin{array}[t]{@{}r@{}}-408{,}558{,}275{,}195\\052{,}041{,}125{,}410\\709{,}098{,}421{,}737\\006{,}073{,}002{,}707\\062{,}093{,}809{,}152\\412{,}686{,}341{,}438\\482{,}246{,}180{,}668\\310{,}916{,}616{,}621\\797{,}285{,}326{,}592\\850\end{array} & \begin{array}[t]{@{}r@{}}474{,}429{,}219{,}063\\300{,}185{,}565{,}645\\455{,}259{,}325{,}512\\848{,}599{,}448{,}586\\624{,}537{,}998{,}871\\452{,}298{,}723{,}305\\636{,}248{,}196{,}802\\116{,}543{,}836{,}994\\827{,}248{,}039{,}610\\563\end{array} \\
\end{array}\]}

\needspace{8\baselineskip}
For $k=20/29$, the point $2T$ gives the solution with $\digitsTwoTKTwentyTwentyninths$ digits in $e$:
{\scriptsize\setlength{\arraycolsep}{4pt}\[\begin{array}{rrrrr}
a & b & c & d & e \\ \hline
\begin{array}[t]{@{}r@{}}10{,}247{,}903{,}337\\734{,}773{,}013{,}296\\618{,}874{,}913{,}990\\046{,}851{,}987{,}284\\489{,}510{,}721{,}827\\589{,}842{,}646{,}032\\016{,}832{,}447{,}782\\251{,}723{,}618{,}627\\258{,}593{,}450{,}943\\030{,}260{,}773{,}506\\001\end{array} & \begin{array}[t]{@{}r@{}}32{,}569{,}217{,}648\\469{,}519{,}789{,}371\\138{,}883{,}081{,}040\\485{,}344{,}387{,}594\\687{,}997{,}098{,}760\\251{,}392{,}975{,}490\\104{,}359{,}336{,}942\\656{,}484{,}304{,}733\\436{,}461{,}126{,}505\\183{,}754{,}019{,}312\\280\end{array} & \begin{array}[t]{@{}r@{}}27{,}346{,}452{,}412\\199{,}192{,}953{,}014\\871{,}330{,}456{,}325\\701{,}112{,}214{,}893\\794{,}127{,}864{,}621\\248{,}832{,}765{,}117\\586{,}696{,}072{,}373\\231{,}654{,}173{,}773\\627{,}778{,}775{,}264\\324{,}622{,}098{,}611\\320\end{array} & \begin{array}[t]{@{}r@{}}23{,}482{,}594{,}235\\762{,}190{,}121{,}811\\681{,}473{,}178{,}244\\905{,}031{,}991{,}204\\373{,}943{,}907{,}027\\409{,}674{,}199{,}605\\070{,}224{,}783{,}434\\727{,}905{,}873{,}372\\005{,}123{,}197{,}564\\539{,}432{,}576{,}249\\360\end{array} & \begin{array}[t]{@{}r@{}}37{,}603{,}929{,}184\\282{,}283{,}075{,}766\\647{,}719{,}627{,}790\\888{,}703{,}180{,}426\\596{,}548{,}770{,}262\\182{,}454{,}049{,}603\\384{,}672{,}110{,}827\\269{,}082{,}051{,}323\\660{,}861{,}063{,}172\\821{,}661{,}386{,}626\\001\end{array} \\
\end{array}\]}